\numberwithin{equation}{section}
\newtheorem{thm}{Theorem}[section]
\newtheorem{prop}[thm]{Proposition}
\newtheorem{lem}[thm]{Lemma}
\newtheorem{example}[thm]{Example}
\theoremstyle{remark}
\newtheorem{rem}[thm]{Remark}
\theoremstyle{definition}
\newtheorem{defn}[thm]{Definition}
\renewcommand{\Re}{\mathrm{Re}\,}
\newcommand{\eps}{\varepsilon}
\newcommand{\sgn}{\mathrm{sgn}}
\newcommand{\one}{\mathbbm{1}}
\newcommand{\weak}{\rightharpoonup}
\newcommand{\norm}[1]{\ifthenelse{\equal{#1}{}}{\mbox{$\|\cdot\|$}}{\mbox{$\| #1 \|$}}}
\newcommand{\dual}[2]{\ifthenelse{\equal{#1}{}}{\mbox{$ \left\langle \,\cdot\, , \, \cdot \, \right\rangle $}}{
\mbox{$ \left\langle #1 \,  , \, #2 \right\rangle$}}}
\newcommand{\rg}{\mathrm{rg}}
\newcommand{\CR}{\mathbb{R}}
\newcommand{\CC}{\mathbb{C}}
\newcommand{\CN}{\mathbb{N}}
\newcommand{\cF}{\mathscr{F}}
\newcommand{\minus}{\,\mbox{-}\,}
\begin{document}

\title{A Pettis-Type Integral and Applications to Transition Semigroups}

\author{Markus Kunze}
\address{Delft Institute of Applied Mathematics, Delft University of Technology, 
P.O. Box 5031, 2600 GA Delft, The Netherlands}
\email{M.C.Kunze@tudelft}
\thanks{The author was supported partially by the Deutsche Forschungsgesellschaft 
in the framework of the DFG research training group 1100 at the University of Ulm
and partially by VICI subsidy 639.033.604 in the `Vernieuwingsimpuls' programme of the Netherlands Organization
for Scientific Research (NWO).}

\abstract
Motivated by applications to transition semigroups, we introduce the notion of a norming dual pair
and study a Pettis-type integral on such pairs. In particular, we establish a sufficient condition
for integrability. We also introduce and study a class of semigroups on such dual pairs which are an abstract 
version of transition semigroups. Using our results, we give conditions ensuring that a semigroup
consisting of kernel operators has a Laplace transform which also consists of kernel operators.
We also provide conditions under which a semigroup is uniquely determined by its Laplace transform.
\endabstract

\keywords{ 
 Pettis-type integral, dual pairs, Laplace transform, transition semigroup.
}

\subjclass[2010]{ 
46G10, 47D06, 60J35.
}

\maketitle

\section{Introduction}
In a certain way, the Bochner integral is the appropriate generalization of the Lebesgue integral to
the Banach space setting. The criterion for Bochner integrability is fairly easy: a strongly measurable
function is Bochner integrable if and only if its norm is integrable.

However, not for all applications this notion of integrability is suitable. In this case, one can 
sometimes resort to the Pettis integral, see \cite{pettis38} or Section II.3 of \cite{diesteluhl}, which still 
yields a rich theory. 
But even the notion of weak measurability, which is a prerequisite for Pettis integrability, is often 
too strong. Indeed, already in the simple example of the shift semigroup on the space of bounded
Borel measures on the real line, the orbits of the semigroup are not weakly measurable, cf.\ \cite{feller}.

It is thus natural to replace in the definition of the Pettis integral
the dual $X^*$ of a Banach space $X$ with some subset $Y$ of $X^*$. This leads to the notion of
$Y$-integrability, see \cite{musial}. However, except for the special cases of the Pettis integral and
the $\mathrm{weak}^*$-integral, this notion of integrability was not the subject of broad investigation.\medskip

In Section \ref{sect3}, we will study $Y$-integrability in the case where $Y$ is a norm-closed subspace
of $X^*$ which is norming for $X$. In particular, we prove a sufficient condition for $Y$-integrability
(Theorem \ref{t.criteria}). Our main assumption in that theorem is the existence of a quasi-complete, consistent
topology $\tau$ on $X$.

It should be noted that the $Y$-integral actually coincides with the Pettis integral 
on the locally convex space $(X,\tau)$, for any locally convex topology $\tau$ on $X$ such that
$(X,\tau )'=Y$. However, in contrast to conditions for Pettis integrability on locally convex
spaces, see \cite{jach}, our condition is more in the spirit of the characterization of Bochner integrability
and the proof makes extensive use of the norm topologies on $X$ and $Y$.\medskip

At first sight, the notion of $Y$-integrability seems quite technical and arbitrary since there is no canonical
choice for the space $Y$. This might be the reason why this notion of integrability was not studied
in more detail so far. However, in applications to transition semigroups, it is quite clear which
space $Y$ should be chosen. This example serves both as a motivation and as an application of our theory.\medskip

We recall that associated to a Markov process $(X_t)_{t\geq 0}$, 
taking values in a measurable space $(E,\Sigma)$,  there are, in fact, {\it two} 
semigroups which both have a stochastic interpretation 
and which are connected to each other by duality. Namely, there is a semigroup $\mathbf{T}$ 
on the space $B_b(E)$ of all bounded measurable functions on $E$, used to compute conditional expectations, 
and a second semigroup $\mathbf{T}'$ acting on $\mathcal{M}(E)$, the space of all bounded measures on 
$E$, which gives the distributions of the random variables $X_t$. This duality
relation actually characterizes the operators in $\mathbf{T}$ as kernel operators, cf.\
\cite{lanthi} and Section \ref{sect2}. Furthermore, this duality relation suggests to replace the space $B_b(E)^*$, 
which is usually quite large, with $\mathcal{M}(E)$ for the purpose of integrating the orbits of $\mathbf{T}$ and, similarly, 
to replace $\mathcal{M}(E)^*$ with $B_b(E)$ to integrate the orbits of $\mathbf{T}'$.\medskip

In applications, it is also important to replace $B_b(E)$ with some closed subspace $X$ of $B_b(E)$ which
is invariant under the semigroup. For example, if $E$ is a topological
space, one wants to replace $B_b(E)$ with $X=C_b(E)$. If $E$ is additionally locally compact, one wants to
work on the space $X= C_0(E)$. This is the classical example of a \emph{Feller semigroup}.
In order to treat also these situations, we shall work on a general norming dual pair $(X,Y)$, 
see Definition \ref{d.ndp}, and introduce in Section \ref{sect4} the abstract notion of  
`semigroups on norming dual pairs'. 

We call such a semigroup `integrable' if it is possible to compute the Laplace transform
in an appropriate way and obtain again operators which respect the duality. {\sc Jefferies} \cite{jefferies1, jefferies2}
studied weakly integrable semigroups on locally convex spaces and made similar assumptions on the semigroup. However,
he does not assume that the Laplace transform respects the duality.
In Theorem \ref{t.measurabledescription}, 
we will show that this assumption -- actually it suffices to consider the Laplace integral in a single point --
is equivalent to the requirement that all orbits of $\mathbf{T}$ are locally $Y$-integrable and all 
orbits of $\mathbf{T}'$ are locally
$X$-integrable. Using the results of Section \ref{sect3}, we study integrable transition semigroups
on the space $C_b(E)$ in Section \ref{sect5}.\medskip

In order to treat transition semigroups on $C_b(E)$, several approaches have been proposed 
in the literature. We mention
the theory of weakly continuous semigroups of {\sc Cerrai} \cite{cerrai}, the theory of bi-continuous semigroups
of {\sc K\"uhnemund} \cite{kuehne}, see also \cite{farkas, kvn} for applications in the context of transition semigroups, and the theory of $\pi$-semigroups by {\sc Priola} \cite{pri}. It should be noted that in these approaches
additional assumptions, in particular continuity and equicontinuity assumptions, are made which ensure that a 
Riemann integral can be used to compute the Laplace transform. Even though integrability is not an issue
in these approaches, the question remains in which sense the Laplace transform determines the semigroup. 
More precisely: is it possible that there exists a second semigroup (not necessarily satisfying the continuity and
equicontinuity assumptions) which yields the same Laplace transform? 
This is not the case as our uniqueness theorem (Theorem \ref{t.laplaceunique}) shows.

It is also possible to interpret the continuity and equicontinuity assumptions in the articles mentioned above
from our `dual point of view'. This yields interesting new results for such semigroups which are
presented elsewhere \cite{k09b}.

\subsection*{Notation}
If $(E,\Sigma )$ is a measurable space, then $B_b(E)$ denotes the space of all bounded,
measurable functions $f: E \to \CC$, endowed with the supremum norm. 
By $\mathcal{M}(E)$ we denote the space of all complex measures on $(E,\Sigma )$. 
The {\it total variation} of a measure $\mu$ is defined by
\[ |\mu |(A) = \sup_{\mathcal{Z}}\sum_{B\in\mathcal{Z}}|\mu (B)| \,\, ,\]
where the supremum is taken over all partitions $\mathcal{Z}$ of $A$ into finitely many,
disjoint, measurable sets. Endowed with the total variation norm $\norm{\mu} :=
|\mu |(E)$, the space $\mathcal{M}(E)$ is a Banach space. 

Now suppose that $E$ is a topological space. 
Then $C_b(E)$ denotes the Banach space of all bounded, continuous functions 
$f : E \to \CC$. The Borel 
$\sigma$-algebra of $E$ is denoted by $\mathcal{B}(E)$. If we speak about measures or measurable functions on a topological space, this is always to be understood with respect to the Borel $\sigma$-algebra. 
A positive measure $\mu \in \mathcal{M}(E)$ is a {\it Radon measure}
if $\mu (A) = \sup\{\mu (K) \, : \, K \subset A\,  , \,K \,\, \mbox{compact}\, \}$ for
all $A \in \mathcal{B}(E)$. An arbitrary $\mu \in \mathcal{M}(E)$ is called a Radon measure
if $|\mu |$ is a Radon measure. We denote the space of all Radon measures on $E$
by $\mathcal{M}_0(E)$. This is a closed subspace of $\mathcal{M}(E)$.

By $\one_A$ we denote the characteristic function of a set $A$. For a complex number $z$,
$\sgn z$ denotes the signum of $z$, i.e.\ $\sgn z := |z|^{-1}\bar{z}$ if $z \neq 0$ and $\sgn 0 := 0$. 
The Dirac measure in a point $x$ is denoted by $\delta_x$. If $E$ is a metric space, then $B(x,r)$ denotes
the open ball of radius $r$ centered at $x$ and $\overline{B}(x,r )$ denotes the closure of
that ball. If $X$ is a Banach space, then $X^*$ denotes the norm dual of $X$ and $\dual{}{}_*$
denotes the canonical duality between $X$ and $X^*$.

\section{Norming Dual Pairs}\label{sect1}

\begin{defn}\label{d.ndp}
Let $X$ and $Y$ be nontrivial Banach spaces and $\dual{}{}$ be a 
duality pairing between $X$ and $Y$. 
Then $(X,Y,\dual{}{})$ is called a {\it norming dual pair}, if
\[ \norm{x}_X = \sup\{ |\dual{x}{y}|\, : \, y \in Y\, , \, \norm{y}_Y \leq 1 \}\,\,\phantom{.} \]
and
\[ \norm{y}_Y = \sup\{ |\dual{x}{y}|\, : \, x \in X\, , \, \norm{x}_X \leq 1 \} \,\, .\]
\end{defn}

We will write $(X,Y)$ for a norming dual pair if 
the duality pairing is understood. Note that if $(X,Y)$ is a norming dual
pair, then so is $(Y,X)$.

As we have done already in the introduction, we will frequently consider $Y$ as a closed subspace
of $X^*$ via $\dual{x}{y}_*= \dual{x}{y}$. With this interpretation, 
$(X,Y)$ is a norming dual pair if and only if $Y$ is a 
closed subspace of $X^*$ which is norming for $X$ in the sense of \cite{arni}. 
For $Y\subset X^*$ to be norming for $X$ it is necessary that 
$Y$ is $\mathrm{weak}^*$-dense in $X^*$. However, 
not every $\mathrm{weak}^*$-dense, closed subspace of $X^*$ is norming, see \cite{dl}.

\begin{example}
If $X$ is a Banach space, then $(X,X^*)$, and thus by symmetry also
$(X^*,X)$, is a norming dual pair with the canonical duality $\dual{}{}_*$. 
If $X$ is reflexive, then $Y = X^*$ is the only closed subspace of $X^*$
such that $(X,Y)$ is a norming dual pair. Indeed, if $Y \subset X^*$ is norm-closed, 
it follows from the Hahn-Banach theorem that $Y$ is weakly closed and hence, 
by reflexivity, weak${}^*$-closed. Since $Y$ is weak${}^*$-dense, it follows that $Y = X^*$.
\end{example}

\begin{example}
Let $(E, \Sigma )$ be a measurable space. Then $(B_b(E), \mathcal{M}(E))$
is a norming dual pair with respect to the duality $\dual{}{}$, given by
\begin{equation}\label{eq.duality}
\dual{f}{\mu} := \int_E f \, d\mu \,\, . 
\end{equation}
\end{example}

\begin{proof}
We clearly have $|\int f \, d\mu | \leq \norm{f}_{\infty}\cdot \norm{\mu}$.
Considering Dirac measures, we obtain $\norm{f}_{\infty} = \sup\{
|\dual{f}{\mu}| \, : \, \mu \in \mathcal{M}(E)\, , \, \norm{\mu} \leq 1\}$. Now let
$\mu \in \mathcal{M}(E)$. If $\mathcal{Z}$ is a partition of $E$ into finitely many, pairwise
disjoint, measurable sets, then $f_{\mathcal{Z}} := \sum_{A\in \mathcal{Z}} \sgn \mu (A) \cdot
\one_A$ is a measurable function of norm at most 1. Furthermore, $|\dual{f_{\mathcal{Z}}}{\mu}|
= \sum_{A\in \mathcal{Z}}|\mu (A)|$. Taking the supremum over all such partitions $\mathcal{Z}$, it
follows that $(B_b(E),\mathcal{M}(E))$ is a norming dual pair.
\end{proof}

\begin{example}
Let $E$ be a completely regular Hausdorff space. Then, endowed with the duality \eqref{eq.duality},
$(C_b(E), \mathcal{M}_0(E))$ is a norming dual pair. 
\end{example}

For a complete, separable metric space $E$, the proof of this statement is implicitly contained
in the proof of Theorem 2.2 of \cite{pri}. We give a proof in the general case.

\begin{proof}
It suffices to show that $\norm{\mu} \geq \sup\{|\dual{f}{\mu}|\, : \, f \in C_b(E)\, , \,
\norm{f}_{\infty} \leq 1 \}$. Let $\mu \in \mathcal{M}_0(E)$ be fixed and  $\mathcal{Z} = 
\{A_1, \ldots , A_n\}$ be a finite partition of $E$ into measurable sets. Since $\mu$ is a Radon
measure, given $\eps > 0$, we find compact sets $C_k \subset A_k$ for $k = 1, \ldots , n$ such that
$|\mu (A_k) - \mu (C_k)| \leq |\mu |(A_k\setminus C_k) \leq \frac{\eps}{n}$. As $E$ is completely
regular, there exists a continuous function $f: E \to \CC$ such that $\norm{f}_\infty\leq 1$ and $f|_{C_k}
\equiv \sgn \, \mu (C_k)$. Now
\[
\left| \int f \, d\mu \right| 
 \geq  \sum_{k=1}^n|\mu (C_k)| - \sum_{k=1}^n|\mu| (A_k\setminus C_k)
\geq  \sum_{k=1}^n |\mu (A_k)| - 2\eps 
\]
follows from the reverse triangle inequality.
As $\eps$ was arbitrary, $\sum_{k=1}^n|\mu (A_k)| \leq \sup\{|\dual{f}{\mu}|\, : \, f \in C_b(E)\, , \,
\norm{f}_{\infty} \leq 1\, \}$. Taking the supremum over all such partitions $\mathcal{Z}$ of $E$,
the claim follows.
\end{proof}

In what follows, we will be interested in locally convex topologies $\tau$ on $X$
which are {\it consistent (with the duality)}. By this we mean that $(X,\tau )'
= Y$, i.e. every $\tau$-continuous linear functional $\varphi$ on $X$ is of the
form $\varphi (x) = \dual{x}{y}$ for some $y \in Y$. By the Mackey-Arens theorem, see
\cite[21.4 (2)]{koethe}, a consistent topology is finer than the {\it weak topology} $\sigma (X,Y)$
and coarser than the {\it Mackey topology} $\mu (X,Y)$. To simplify notation, we will write
$\sigma$ for $\sigma (X,Y)$ and $\sigma'$ for the $\sigma (Y,X)$ topology on
$Y$. We will write $\weak$ (resp. $\weak'$) to indicate convergence with respect to
$\sigma$ (resp. $\sigma'$). We will use the name of a topology as a label or prefix to topological 
notions to indicate
that it is to be understood with respect to that topology. Without label or prefix, such notions
are always understood with respect to the norm topology.
 
We now characterize bounded subsets in a norming dual pair.

\begin{prop}\label{p.bounded}
Let $(X,Y)$ be a norming dual pair and $\tau$ be a consistent topology on $X$. 
For a subset $M\subset X$, the following are equivalent.
\begin{enumerate}
\item $M$ is norm-bounded;
\item $M$ is $\sigma$-bounded;
\item $M$ is $\tau$-bounded.
\end{enumerate}
\end{prop}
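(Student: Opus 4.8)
The plan is to prove the three statements equivalent by running the cycle $(i)\Rightarrow(iii)\Rightarrow(ii)\Rightarrow(i)$. The implication I expect to be the main obstacle is $(ii)\Rightarrow(i)$, the recovery of norm-boundedness from mere $\sigma$-boundedness; the other two are softer and follow from the Mackey-Arens sandwich $\sigma\subseteq\tau\subseteq\mu(X,Y)$ together with the norming identity. I note first that $(iii)\Rightarrow(ii)$ is immediate: since $\tau$ is consistent, $\sigma\subseteq\tau$, and a coarser topology admits at least as many bounded sets as a finer one, so $\tau$-bounded sets are $\sigma$-bounded.

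For $(ii)\Rightarrow(i)$ I would use that the norming condition turns the map $x\mapsto\dual{x}{\,\cdot\,}$ into an isometric embedding of $X$ into the norm dual $Y^*$: the estimate $|\dual{x}{y}|\leq\norm{x}_X\norm{y}_Y$ shows this functional lies in $Y^*$ with norm at most $\norm{x}_X$, while the defining supremum $\norm{x}_X=\sup\{|\dual{x}{y}|:\norm{y}_Y\leq 1\}$ shows its $Y^*$-norm equals $\norm{x}_X$. By definition, $\sigma$-boundedness of $M$ means exactly that the family $\{\dual{x}{\,\cdot\,}:x\in M\}$ is pointwise bounded on the Banach space $Y$. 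As $Y$ is complete, the uniform boundedness principle produces a finite bound for the $Y^*$-norms, hence for $\sup_{x\in M}\norm{x}_X$, which is precisely $(i)$. This is the crux of the argument, and it is here that completeness of $Y$ and the norming identity are genuinely used; by symmetry the same reasoning gives that $\sigma'$-bounded subsets of $Y$ are norm-bounded, since $(Y,X)$ is again a norming dual pair.

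Finally, for $(i)\Rightarrow(iii)$ I would invoke the other half of the sandwich, $\tau\subseteq\mu(X,Y)$, and the standard description of a $0$-neighbourhood basis for the Mackey topology as the polars $K^\circ=\{x\in X:\sup_{y\in K}|\dual{x}{y}|\leq 1\}$ of absolutely convex, $\sigma'$-compact sets $K\subseteq Y$. Each such $K$ is $\sigma'$-bounded, hence norm-bounded by the symmetric form of $(ii)\Rightarrow(i)$ established above, so $K$ is contained in some multiple $r$ of the unit ball of $Y$. Since the polar of the unit ball of $Y$ is, by the norming identity, exactly the unit ball of $X$, it follows that $K^\circ$ contains a multiple of the unit ball of $X$ and therefore absorbs every norm-bounded set. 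Thus a norm-bounded $M$ is $\mu(X,Y)$-bounded, and as $\tau\subseteq\mu(X,Y)$ it is a fortiori $\tau$-bounded, which closes the cycle. (Once $(i)\Leftrightarrow(ii)$ is available one could instead quote the general principle that the bounded sets are the same for all topologies consistent with a fixed dual pair, but I prefer to keep the polar argument explicit.)
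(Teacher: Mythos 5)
Your proof is correct, and its crux --- the implication (ii)$\Rightarrow$(i) via the isometric embedding $X\hookrightarrow Y^*$ given by the norming identity followed by the uniform boundedness principle on the Banach space $Y$ --- is exactly the paper's argument. The difference lies in how the $\tau$-boundedness statement is handled: the paper proves (i)$\Leftrightarrow$(ii) and then simply cites K\"othe \S 20.11(7) for the general fact that all topologies consistent with a dual pair have the same bounded sets, whereas you arrange the three statements in a cycle and prove (i)$\Rightarrow$(iii) by hand, via the Mackey--Arens sandwich $\sigma\subseteq\tau\subseteq\mu(X,Y)$ and the observation that each Mackey basic neighbourhood $K^\circ$ (polar of an absolutely convex $\sigma'$-compact $K\subseteq Y$) contains a multiple of the unit ball of $X$, because $K$ is $\sigma'$-bounded, hence norm-bounded by the symmetric form of (ii)$\Rightarrow$(i), and the polar of the unit ball of $Y$ is precisely the unit ball of $X$ by the norming identity. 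Your route is self-contained and makes visible where the norming hypothesis enters a second time (through the dual pair $(Y,X)$), at the cost of a slightly longer argument; the paper's route is shorter but leans on a textbook reference. Both are sound, and your explicit polar computation is a nice illustration of why the citation works in this particular setting.
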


\begin{proof}
(i) $\Rightarrow$ (ii). As $M$ is $\sigma$-bounded iff $\sup_{x\in M} |\dual{x}{y}| < \infty$
for all $y \in Y$, this implication is trivial. $\quad$ (ii) $\Rightarrow$ (i). If $M$ is $\sigma$-bounded,
the uniform boundedness principle in $Y^*$ implies that $\sup_{x\in M}\norm{x} =\sup_{x\in M} \norm{x}_{Y^*}$ is bounded.
$\quad$ (ii) $\Leftrightarrow$ (iii). See \S 20.11 (7) in \cite{koethe}.
\end{proof}

\section{Operators on Norming Dual Pairs}\label{sect2}

If $\tau$ is a locally
convex topology on $X$, we denote the  algebra of $\tau$-continuous linear operators
on $X$ by $L(X,\tau )$. For $\tau =\tau_{\|\cdot\|}$, where $\tau_{\|\cdot\|}$ is the norm topology,
 we merely write $L(X)$ instead of $L(X,\tau_{\|\cdot\|})$.
 For $T \in L(X)$, we denote its norm-adjoint by $T^*$.
If $T \in L(X,\sigma )$, then we denote its $\sigma$-adjoint by $T'$.

\begin{prop}\label{p.operator}
Let $(X,Y)$ be a norming dual pair. 
\begin{enumerate}
 \item $T\in L(X,\sigma )$ if and only if
$T\in L(X)$ and $T^*Y \subset Y$. In this case, $T' = T^*|_Y$.
Furthermore, $\norm{T}_{L(X)} = \norm{T'}_{L(Y)}$.
 \item $L(X,\sigma )$ is closed in $L(X)$ with respect to the 
operator norm. 
\end{enumerate}
\end{prop}

\begin{proof}
(i) If $T$ is $\sigma$-continuous, then $T$ maps $\sigma$-bounded sets into $\sigma$-bounded sets.
By Proposition \ref{p.bounded}, $T$ is a bounded operator on $X$, hence $T\in L(X)$. Furthermore,
as $T$ is $\sigma$-continuous, it has a $\sigma$-adjoint $S$. But for $x \in X$ and $y\in Y$
we have $\dual{Tx}{y} = \dual{x}{Sy} = \dual{x}{T^*y}_*$. It follows that $T^*y = Sy \in Y$, 
and thus $T^*$ leaves $Y$ invariant.
$\quad$
Conversely assume that $T\in L(X)$ and $T^*Y\subset Y$. Then we have $\dual{Tx}{y} = \dual{x}{T^*y}$
for all $x\in X$ and $y\in Y$. Since $T^*y \in Y$ by assumption, it follows that the map $x \mapsto
\dual{Tx}{y}$ is $\sigma$-continuous and thus, since $y$ was arbitrary, $T \in L(X,\sigma )$.
Finally, we have
\[\norm{T'}_{L(Y)} = \sup_y\sup_x |\dual{x}{T'y}| =
\sup_x\sup_y|\dual{Tx}{y}| = \norm{T}_{L(X)} \,\, , \]
where all suprema are taken over elements of norm at most 1.

(ii) Let $(T_n) \subset L(X,\sigma )$ be given with $T_n\to T \in L(X)$ in the operator norm. 
By (i), it suffices to prove $T^*Y\subset Y$. Let $y\in Y$ be given. By assumption, $T_n'y \in Y$. Furthermore,
\[ \norm{T'_ny-T'_my} = \sup_{\|x\|\leq 1}|\dual{T_nx-T_mx}{y}| \leq \norm{T_n-T_m}\cdot \norm{y} \,\, . \]
Thus $T'_ny$ is a Cauchy sequence in $Y$ and hence converges to some $\tilde{y}\in Y$. Now for
arbitrary $x\in X$ we have $\dual{Tx}{y} = \lim \dual{x}{T_n'y} = \dual{x}{\tilde{y}}$, proving
that $T^*y = \tilde{y}\in X$. This finishes the proof.
\end{proof}

In the study of transition semigroups, one is mainly interested in positive contraction operators which are kernel operators, as they give the transition probabilities for a Markov process.
Let us recall the following definition.

\begin{defn}
Let $(E,\Sigma )$ be a measurable space. A {\it bounded kernel} on $E$ is a mapping
$k : E\times \Sigma \to \CC$ such that
\begin{enumerate}
\item $k(x, \cdot )$ is a complex measure on $(E,\Sigma )$ for all $x \in E$;
\item $k(\cdot , A)$ is measurable for all $A \in \Sigma$;
\item $\sup_{x\in E} |k|(x, E) < \infty$. Here, $|k|(x, \cdot )$ is the 
total variation of $k(x, \cdot )$.
\end{enumerate}
A linear operator $T$ on a closed subspace $X$ of $B_b(E)$ is called a {\it kernel operator (on $X$)} if
there exists a bounded kernel $k$ on $E$ such that
\begin{equation}\label{eq.kernelop}
(Tf)(x) = \int_E f(y) \, k(x,dy) \quad , \, \forall\, f \in X\,\, .
\end{equation}
\end{defn} 

We now prove that for many spaces $X \subset B_b(E)$ a kernel operator on $X$ is the same as a 
$\sigma$-continuous operator for the norming dual pair $(X,\mathcal{M})$.
We need some preparation. 

If $S$ is any set of functions, we denote by $\sigma (S)$ the $\sigma$-algebra generated
by $S$, i.e. the smallest $\sigma$-algebra such that every $f\in S$ is measurable with 
respect to this $\sigma$-algebra. If $S$ is a Stonean vector lattice, i.e.\ a 
vector lattice of functions such that if $f\in S$ is real then also $\inf\{f,\one\}
\in S$, then the system
\[ \mathcal{E}(S) := \{ \, A \, : \, \exists\, u_n \in S\,\,\,\mbox{such that}\,\,\,
0\leq u_n \uparrow \one_A \,\,\mbox{pointwise}\, \} \]
generates $\sigma (S)$ and is closed under finite intersections, see
\cite[Lemma 39.4]{bauer}.

\begin{defn}
Let $(E, \Sigma )$ be a measurable space and $X \subset B_b(E)$ be a $\norm{}_{\infty}$-closed 
subspace of $B_b(E)$ which is a Stonean vector lattice. Further, let $\mathcal{M}_{(0)}(E)$ denote
either $\mathcal{M}(E)$ or $\mathcal{M}_0(E)$. In the latter case we additionally assume that $E$
is a completely regular Hausdorff space. Then $X$ is called a $\mathcal{M}_{(0)}(E)$-{\it transition
space} for $E$ if
\begin{enumerate}
\item $(X,\mathcal{M}_{(0)}(E))$ is a norming dual pair (with the duality \eqref{eq.duality});
\item $\sigma (X) = \Sigma $;
\item There exists a sequence $(f_n)_{n\in\CN} \subset X$ such that $0 \leq f_n \uparrow \one$ pointwise.
\end{enumerate}
\end{defn}

\begin{example}
For every measurable space $(E,\Sigma )$, the space $B_b(E)$ is a $\mathcal{M}(E)$-transition 
space for $E$. If $E$ is a metric space, then $C_b(E)$ is a $\mathcal{M}_0(E)$-transition space for $E$. Indeed,
$\mathcal{E}(C_b(E))$ contains every open $F_{\sigma}$-set and hence, since $E$ is a metric
space, every open set. Thus $\sigma (C_b(E)) = \mathcal{B}(E)$.
\end{example}

The following is a generalization of Theorem 4.8.1. in \cite{jacob}.

\begin{prop}\label{t.extend}
Let $(E,\Sigma )$ be a measurable space and $X$ be a $\mathcal{M}_{(0)}(E)$-transition space for $E$. 
Denote by $\sigma$ the $\sigma (X, \mathcal{M}_{(0)}(E))$-topology. Consider the following
statements:
\begin{enumerate}
\item $T\in L(X, \sigma )$;
\item $T$ is a kernel operator on $X$.
\end{enumerate}
Then (i) $\Rightarrow$ (ii). In this case, $T$ has a unique extension to a kernel operator on $B_b(E)$. 
If $\mathcal{M}_{(0)}(E) = \mathcal{M}(E)$, then also (ii) $\Rightarrow$ (i).
\end{prop}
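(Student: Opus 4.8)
The plan is to prove the two implications separately, with the forward direction $(i) \Rightarrow (ii)$ being the substantive one. Suppose $T \in L(X,\sigma)$. By Proposition~\ref{p.operator} we know $T \in L(X)$ and $T^* \mathcal{M}_{(0)}(E) \subset \mathcal{M}_{(0)}(E)$, with $T' = T^*|_{\mathcal{M}_{(0)}(E)}$. To build the kernel, I would define, for each $x \in E$,
\[ k(x, A) := (T^*\delta_x)(A) \,\, , \]
noting that $\delta_x \in \mathcal{M}_{(0)}(E)$ (a Dirac measure is certainly Radon), so $T^*\delta_x$ lands in $\mathcal{M}_{(0)}(E)$ and in particular is a genuine complex measure on $(E,\Sigma)$. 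This immediately gives property (i) of a bounded kernel. The total variation bound (iii) follows since $\|k(x,\cdot)\| = \|T^*\delta_x\| \leq \|T^*\| \cdot \|\delta_x\| = \|T\|$, uniformly in $x$. The defining identity~\eqref{eq.kernelop} is then just the duality computation
\[ (Tf)(x) = \dual{Tf}{\delta_x} = \dual{f}{T^*\delta_x} = \int_E f(y)\, k(x,dy) \,\, , \]
valid because $\dual{g}{\delta_x} = g(x)$ for every $g \in X$.

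The main obstacle is verifying measurability condition (ii), that $x \mapsto k(x,A)$ is $\Sigma$-measurable for every $A \in \Sigma$. This is where the transition-space hypotheses do the work, and I expect this to be the heart of the argument. For $f \in X$ we have $x \mapsto k(x, \cdot)$ tested against $f$ equal to $x \mapsto (Tf)(x)$, which is measurable since $Tf \in X \subset B_b(E)$. The strategy is a monotone class / Dynkin-system argument: let $\mathcal{D}$ be the collection of sets $A \in \Sigma$ for which $x \mapsto k(x,A)$ is measurable. One first shows this holds for $A \in \mathcal{E}(X)$ using condition (iii) of the transition-space definition together with the sequence $f_n \uparrow \one$; for such $A$ there exist $u_n \in X$ with $0 \leq u_n \uparrow \one_A$, so $k(x,A) = \lim_n (Tu_n)(x)$ is a pointwise limit of measurable functions, hence measurable. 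Since $\mathcal{E}(X)$ is closed under finite intersections and generates $\sigma(X) = \Sigma$ (hypothesis (ii) of transition space), and $\mathcal{D}$ is a Dynkin system (closure under complements and increasing limits follows from the measure structure of each $k(x,\cdot)$ and dominated/monotone convergence), the Dynkin lemma yields $\mathcal{D} = \Sigma$. One subtlety is handling complex rather than positive measures, which I would dispatch by reducing to positive $f$ and real/imaginary and positive/negative parts.

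For the uniqueness of the extension to $B_b(E)$, I would argue that the kernel $k$ already determines an operator $\tilde T$ on all of $B_b(E)$ via formula~\eqref{eq.kernelop}, and that any kernel operator on $B_b(E)$ restricting to $T$ on $X$ must have a kernel agreeing with $k$ on $\mathcal{E}(X)$; since $\mathcal{E}(X)$ is intersection-stable and generates $\Sigma$, a uniqueness-of-measures argument forces the kernels to coincide, hence $\tilde T$ is the unique extension. Finally, for the converse $(ii) \Rightarrow (i)$ in the case $\mathcal{M}_{(0)}(E) = \mathcal{M}(E)$: given that $T$ is a kernel operator with kernel $k$, I would show $T \in L(X)$ with $\|T\| \leq \sup_x |k|(x,E)$ directly from~\eqref{eq.kernelop}, then show $T^*\mathcal{M}(E) \subset \mathcal{M}(E)$ by checking that for $\mu \in \mathcal{M}(E)$ the functional $f \mapsto \int_E (Tf)\, d\mu = \int_E \int_E f(y)\, k(x,dy)\, d\mu(x)$ is represented, via Fubini for the finite kernel, by the measure $\nu(A) := \int_E k(x,A)\, d\mu(x)$, which lies in $\mathcal{M}(E)$; invoking Proposition~\ref{p.operator} then gives $T \in L(X,\sigma)$. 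The hypothesis $\mathcal{M}_{(0)} = \mathcal{M}$ is needed here precisely because $\nu$ need not be Radon in the $\mathcal{M}_0$ case, which is why the converse is asserted only for the full measure space.
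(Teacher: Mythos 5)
Your proposal is correct and follows essentially the same route as the paper: the kernel is defined by $k(x,\cdot):=T'\delta_x$, the measurability of $k(\cdot,A)$ is obtained by a Dynkin-system argument over $\mathcal{E}(X)$ using the approximating sequences $u_n\uparrow\one_A$ and dominated convergence, and the converse is proved by exhibiting the adjoint on $\mathcal{M}(E)$ as $\mu\mapsto\int_E k(x,\cdot)\,d\mu(x)$ and invoking Proposition~\ref{p.operator}. Your explicit uniqueness-of-measures argument for the extension to $B_b(E)$ is a small addition the paper leaves implicit, but it does not change the method.
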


\begin{proof}
(i) $\Rightarrow$ (ii): If $T\in L(X,\sigma)$, then $k(x,\cdot ):= T'\delta_x \in \mathcal{M}_{(0)}(E)$. 
By definition, we have $(Tf)(x) = \dual{Tf}{\delta_x} = \dual{f}{T'\delta_x} = \int f(y) k(x,dy)$.
Furthermore,
$\sup_x|k|(x,E) \leq \norm{T} < \infty$. It remains to prove that $k(\cdot , A)$ is
measurable for any $A \in \Sigma$. Denote the collection of sets $A$ for which this is
true by $\mathcal{G}$. Then $\mathcal{E}(X) \subset \mathcal{G}$. Indeed, if $A\in
\mathcal{E}(X)$, then there exists a sequence $(u_n)_{n\in\CN} \subset X$ with $0\leq u_n\uparrow \one_A$.
Now the dominated convergence theorem yields
\[ k(x,A) = \dual{\one_A}{T'\delta_x}= \lim_{n\to\infty} \dual{u_n}{T'\delta_x} = \lim_{n\to\infty} (Tu_n)(x) \,\, ,\]
for all $x \in E$.
Hence $k(\cdot, A)$ is measurable as the pointwise limit of measurable functions. 
Using the properties of a bounded kernel, it is easy to see that $\mathcal{G}$ is a Dynkin
system. Now $\mathcal{G} = \Sigma$ follows from the Dynkin $\pi\minus \lambda$ theorem since
$\mathcal{E}(X)$ is closed under finite intersections.

(ii) $\Rightarrow$ (i): By hypothesis, there exists a kernel $k$ such that \eqref{eq.kernelop}
holds for all $f \in X$. However, the right hand side of \eqref{eq.kernelop} also defines a bounded linear 
operator on $B_b(E)$ (which we still denote by $T$). We may also define an operator $S$ on $\mathcal{M}(E)$ by
\[ (S\mu )(A) := \int_E k(x,A)\, d\mu (x) \,\, . \]
It is easy to see that $S \in L(\mathcal{M}(E))$. However, for $f = \one_A$, we have
\[ \dual{Tf}{\mu} = \int_E k(x,A)\, d\mu = \dual{f}{S\mu} \quad \forall\, \mu \in \mathcal{M}(E)\,\,.\] 
Using linearity and approximation, we see that the above equation holds for arbitrary $f \in B_b(E)$. 
This proves $T^*\mathcal{M} \subset \mathcal{M}$ and hence (i) by Proposition \ref{p.operator}.
\end{proof}

\section{A Variant of the Pettis Integral}\label{sect3}
Throughout this section we fix a norming dual pair $(X,Y)$ and 
a $\sigma$-finite measure space $(\Omega, \mathscr{F}, m )$. 

\begin{defn}
A function $f: \Omega \to X$ is called {\it scalarly} $Y$-{\it measurable} 
({\it scalarly} $Y$-{\it integrable}), if the function $\omega \mapsto \langle f(\omega ),y\rangle$ is 
measurable (integrable) for every $y\in Y$.
\end{defn}

As in the proof of Lemma 1 in Section II.3 of \cite{diesteluhl}, one sees that
if $f$ is scalarly $Y$-integrable, then for any $A \in \cF$ the linear functional
$\varphi_A := [ y \mapsto \int_A \langle f(\omega ),y\rangle \, dm]$
is norm continuous and hence an element of $Y^*$.

\begin{defn}
If $f$ is scalarly $Y$-integrable, then the element $\varphi_A$ of $Y^*$ is called
{\it the $Y$-integral of $f$ over} A. We write $\int_A f\, dm := \varphi_A$.
If $\varphi_A \in X \subset Y^*$, for every $A \in \cF$, we say that $f$ is {\it $Y$-integrable}. 
\end{defn}

If $f$ is $Y$-integrable, then, by definition, we may interchange integration and application
of linear functionals in $Y$. The following lemma shows that the same is true for
linear operators in $L(X,\sigma )$. We omit its easy proof.

\begin{lem}\label{l.interchange}
Let $f: \Omega \to X$ be scalarly $Y$-integrable such that $\int_{\Omega}f\, dm \in X$. 
Then, for $T \in L(X,\sigma )$, 
the function $Tf$ is scalarly $Y$-integrable and we have $\int_\Omega Tf\,dm = T\int_\Omega f\, dm \in X$.
\end{lem}

Our main result about $Y$-integrability is the following.

\begin{thm}\label{t.criteria}
Assume that there exists a consistent topology $\tau$ on $X$ 
such that $(X, \tau )$ is quasi-complete, i.e. $\tau$ is complete on every bounded, closed
subset of $(X, \tau )$.
Then every almost $\tau$-separably valued, scalarly $Y$-integrable function $f: \Omega \to X$,
such that $\|f\|$ is majorized by an integrable function, is $Y$-integrable.
Here, $f$ is called almost $\tau$-separably valued if there exists a null set $N$ and
a $\tau$-separable subspace $X_0$ of $X$ such that $f(\Omega \setminus N) \subset X_0$.
\end{thm}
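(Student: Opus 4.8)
We must show that the functional $\varphi_A \in Y^*$ defined in \eqref{eq.integral} actually lands back in $X$ (viewed inside $Y^*$) for every $A \in \cF$. The natural strategy is to approximate $f$ by simple functions whose integrals manifestly lie in $X$, and then to use the completeness of $(X,\tau)$ to conclude that the limit stays in $X$. The separability hypothesis is precisely what lets us set up such an approximation, while the domination $\|f\| \leq g$ for some integrable $g$ controls the tails.

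\emph{Reduction and set-up.} First I would discard the null set $N$, so that $f(\Omega) \subset X_0$ with $X_0$ a $\tau$-separable closed subspace. It suffices to treat a single fixed $A \in \cF$; replacing $m$ by its restriction to $A$, I may as well integrate over all of $\Omega$. Since $X_0$ is $\tau$-separable, I can choose a countable set dense in $X_0$ for the topology $\tau$, and use it to build a countable family of ``slices'' of $X_0$. The goal is to produce a sequence of $Y$-simple functions $f_n$ (finite $X$-valued linear combinations of indicators of measurable sets) with $f_n \to f$ in an appropriate sense and with $\|f_n\| \leq 2g$ or so, uniformly. For each simple function the integral $\int_\Omega f_n \, dm$ is a finite $X$-valued combination and hence lies in $X$.

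\emph{The key mechanism.} The crux is to pass to the limit in $X$ rather than merely in $Y^*$. Here is where completeness of $\tau$ enters. For $y \in Y$ with $\|y\|_Y \leq 1$ we have
\[
|\langle \varphi_A^{(n)} - \varphi_A^{(k)}, y\rangle| = \left| \int_\Omega \langle f_n(\omega) - f_k(\omega), y\rangle \, dm \right| \leq \int_\Omega \|f_n(\omega) - f_k(\omega)\| \, dm,
\]
where $\varphi_A^{(n)} := \int_\Omega f_n \, dm \in X$. The right-hand side is small once $f_n$ is Cauchy in $L^1(m;X)$-type norm, which the domination and dominated convergence furnish. This estimate shows $(\varphi_A^{(n)})$ is Cauchy for the norm of $Y^*$; but since each $\varphi_A^{(n)} \in X$ and we want the limit in $X$, I instead argue that $(\varphi_A^{(n)})$ is $\tau$-Cauchy in $X_0$. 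Because $\tau$ is consistent, $\tau$-continuous functionals are exactly those given by elements of $Y$, and the $\tau$-topology on the bounded set is controlled by testing against such $y$; combined with completeness of $(X,\tau)$, the sequence $\varphi_A^{(n)}$ $\tau$-converges to some $x_A \in X$. Finally $\langle x_A, y\rangle = \lim \langle \varphi_A^{(n)}, y\rangle = \int_\Omega \langle f, y\rangle \, dm = \langle \varphi_A, y\rangle$ for all $y \in Y$, so $\varphi_A = x_A \in X$.

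\emph{The main obstacle.} The delicate point is reconciling the \emph{norm} estimate above (which naturally gives Cauchy-ness in $Y^*$) with \emph{$\tau$-completeness} of $X$, since $\tau$ may be strictly coarser than the norm and $(X, \norm{})$ need not be complete as a subspace of $Y^*$. I expect the careful argument to be: the sequence $\varphi_A^{(n)}$ is $\tau$-Cauchy because $\tau$ is coarser than the Mackey topology and the differences are uniformly norm-bounded (via Proposition \ref{p.bounded}) while converging weakly; one then invokes $\tau$-completeness to extract the limit inside $X$, and checks that the boundedness plus consistency forces the weak limit to coincide with the $\tau$-limit. Getting the approximation $f_n$ to be simultaneously $Y$-simple, dominated, and $\tau$-convergent to $f$ off a null set — using $\tau$-separability to select the approximating values — is the technical heart, and I would handle it by a standard partition-of-$X_0$-into-small-$\tau$-neighborhoods construction.
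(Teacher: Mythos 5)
Your approach --- approximate $f$ by $X$-valued simple functions and pass to the limit --- runs into a gap that is not just technical. The central estimate
\[
\Bigl|\bigl\langle \varphi_A^{(n)}-\varphi_A^{(k)},y\bigr\rangle\Bigr| \leq \int_\Omega \|f_n(\omega)-f_k(\omega)\|\,dm
\]
is only useful if the simple functions approximate $f$ in norm, i.e.\ if $f$ is essentially strongly measurable. But the hypotheses give only $Y$-measurability and a $\tau$-separable range, and (as the Remark following the theorem stresses) this does \emph{not} imply strong measurability; the partition-of-$X_0$-into-small-$\tau$-neighborhoods construction can at best produce $f_n\to f$ in $\tau$, so $\|f_n-f_k\|$ need not tend to $0$ and the displayed bound gives nothing. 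Your proposed repair --- that $\varphi_A^{(n)}$ is $\tau$-Cauchy because the differences are norm-bounded and converge weakly --- is false for a general consistent topology: $\tau$ may be as fine as the Mackey topology $\mu(X,Y)$, and for $X$ a Hilbert space with $Y=X^*$ this is the norm topology, where bounded weakly null sequences (an orthonormal basis) are not norm-Cauchy. There are further unaddressed obstructions in the construction itself: measurability of the preimages of $\tau$-neighborhoods under a merely $Y$-measurable $f$, norm control of the chosen approximating values (a $\tau$-neighborhood is norm-unbounded), and the need for a single sequence that is Cauchy for \emph{all} $\tau$-seminorms simultaneously.

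The paper's proof never approximates $f$ at all. It first reduces to the case where $(X,\tau)$ itself is separable (replacing $X$ by $\overline{X_0}^{\,\tau}$ and $Y$ by the quotient $Y/X_1^\perp$), then shows via dominated convergence (Lemma \ref{l.yint}) that the functional $\varphi_A$ is \emph{sequentially} $\sigma(Y,X)$-continuous on $Y$. The two hypotheses you struggled to combine enter through a duality mechanism you are missing: Grothendieck's completeness theorem says that, since $(X,\tau)$ is complete, a linear functional on $Y$ belongs to $X=(Y,\sigma')'$ as soon as it is $\sigma'$-continuous on every $\tau$-equicontinuous subset of $Y$; and separability of $(X,\tau)$ makes $\sigma'$ metrizable on such subsets, so sequential $\sigma'$-continuity already suffices there. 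That combination is the actual content of the proof, and without it (or a genuinely different substitute) your argument does not close.
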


\begin{rem}
As a consequence of \cite[\S 18 4.(4)]{koethe}, there exists a quasi-complete consistent topology
$\tau$ on $X$ if and only if $\mu (X,Y)$ is quasi-complete.  
\end{rem}
We first prove some preliminary lemmata which will also be used independently of the theorem.

\begin{lem}\label{l.yint}
Assume that $f: \Omega \to X$ is scalarly $Y$-measurable and that $\|f\|$ is majorized by an integrable function $g$.
Then $f$ is scalarly $Y$-integrable and the $Y$-integral of $f$ over any $A\in \cF$ is sequentially $\sigma'$-continuous and satisfies the estimate
\begin{equation}\label{eq.integralestimate}
\left\|\int_A f\, dm\right\|_{Y^*} \leq \int_{A} g(\omega ) \, dm(\omega) \,\, .
\end{equation}
\end{lem}

\begin{proof}
As $f$ is scalarly $Y$-measurable and satisfies the estimate $|\langle f(\cdot ),y\rangle | \leq 
g(\cdot )\|y\|$, it follows that $f$ is scalarly $Y$-integrable. 
Integrating this inequality and taking the supremum over $y$ with 
$\norm{y}\leq 1$, estimate \eqref{eq.integralestimate} follows.
Now, let $(y_n)_{n\in\CN}$ be a sequence in $Y$ and assume
$y_n \weak' y \in Y$.  Then $\langle f,y_n\rangle \to \langle f,y\rangle$
pointwise on $\Omega$. By Proposition \ref{p.bounded}, $\|y_n\|$ is bounded,
say by $M$.  Hence $|\langle f , y_n \rangle| \leq M \cdot g$.
Thus $\varphi_A$ is sequentially $\sigma'$-continuous by the dominated convergence theorem.
\end{proof} 

\begin{lem}\label{l.dom}
Let $f : \Omega \to X$ be a scalarly $Y$-measurable function such that $\norm{f}\leq g$ a.e.\ for some
integrable function $g$. Furthermore, let $(\alpha_n)_{n\in\CN}$ be a bounded sequence in $L^{\infty}(m)$ converging
pointwise a.e.\ to $\alpha \in L^{\infty}(m)$. Then $\int_{\Omega} \alpha_n f \, dm$
converges to $\int_{\Omega} \alpha f\, dm$ with respect to the norm in $Y^*$.
In particular, $A \mapsto \int_A f\, dm$ defines a countably additive vector measure 
with values in $Y^*$. 
\end{lem}

\begin{proof}
By \eqref{eq.integralestimate}, we have
\[ \left\| \int_{\Omega} \alpha_n f\, dm - \int_{\Omega} \alpha f\, dm\right\|_{Y^*}
 \leq \int_\Omega |\alpha_n - \alpha | g\, dm \to 0\,\, ,
\]
by dominated convergence. The addendum follows by applying this to 
$\alpha_n := \one_{\bigcup_1^nA_k}$ and $\alpha := \one_{\bigcup_1^\infty A_k}$
for some sequence $(A_k)_{k\in\CN} \subset \cF$ of pairwise disjoint sets.
\end{proof}

\begin{proof}[Proof of Theorem \ref{t.criteria}]
We first make some simplifying assumptions.

We assume without loss of generality that the set $N$ is empty, 
otherwise changing $f$ on a set of measure 0. We may furthermore assume that $(X, \tau )$ is separable. 
If this is not the case, we replace $X$ by $X_1:=\overline{X_0}^{\,\tau}$ and $Y$ by $Y/X_1^\perp$. 
Since the norm topology is finer than $\tau$, the space $X_1$ is norm closed in $X$ and hence a Banach space. 
Furthermore, $(X_1,\tau |_{X_1})$ is a quasi-complete locally convex space and, 
as a consequence of the Hahn-Banach theorem, we have $(X_1, \tau |_{X_1} )' = Y/X_1^{\perp}$.

Last, we may assume that $\norm{f}$ is bounded. Indeed, assuming that $\norm{f} \leq g \in L^1(m)$, we may
consider $f_n := \one_{A_n}f$, where $A_n := \{ g\leq n \} \in \cF$. If we know that the $Y$-integral of $f_n$
over some set $A$ belongs to $X$ for every $n \in \CN$, then so does the $Y$-integral of $f$ over
the set $A$ by Lemma \ref{l.dom} and the closedness of $X$ in $Y^*$.\medskip

Denote the completion of $(X,\tau )$ by $(\tilde{X}, \tilde{\tau})$. Then
$(\tilde{X}, \tilde{\tau})$ is locally convex and separable. Furthermore, by \cite[\S 21.4 (5)]{koethe},
$(\tilde{X}, \tilde{\tau} )' = Y$. 

Now let $A \in \cF$ with (strictly) positive finite measure be given. 
By Lemma \ref{l.yint}, the $Y$-integral $\varphi_A \in Y^*$ of $f$ over
$A$ is sequentially $\sigma'$-continuous and hence in particular sequentially $\sigma (Y, \tilde{X})$-continuous.
Since $(\tilde{X}, \tilde{\tau } )$ is complete and separable, $\varphi_A$ is $\sigma (Y, \tilde{X})$-continuous
by \cite[\S 21.9 (5)]{koethe} and thus $\varphi_A \in (Y, \sigma (Y, \tilde{X}))' = \tilde{X}$.

Now consider $B_0 = \mathrm{co}\{f(\omega )\, : \, \omega \in \Omega\}$. Then $B_0$ is convex and bounded
and hence so is its $\tilde{\tau}$-closure $B$. Since $(X, \tau )$ is quasi-complete, $B \subset X$.

We claim that $m(A)^{-1} \varphi_A \in B$. Indeed, if this was not the case, then, by the Hahn-Banach 
theorem, there exist $\eps > 0$ and $y \in Y = (\tilde{X}, \tilde{\tau})'$ such that
$\Re \dual{f(\omega )}{y} + \eps \leq m(A)^{-1}\Re\dual{\varphi_A}{y}$ for every $\omega \in \Omega$. 
Integrating this equation
yields $\Re \dual{\varphi_A}{y} + \eps m(A) \leq \Re \dual{\varphi_A}{y}$ -- a contradiction since
$m(A) > 0$. It follows that $\varphi_A \in X$. 

For a general set $A$ of positive measure, 
approximate $A$ by a sequence $(A_n)_{n\in\CN}$ with $0<m(A_n)< \infty$ and use Lemma \ref{l.dom}.
\end{proof}

Let us briefly discuss the assumptions of Theorem \ref{t.criteria}.

{\it The case $Y = X^*$ --} A function $f: \Omega \to X$ is $X^*$-integrable iff it is Pettis integrable in
the classical sense. Note that the norm topology on $X$ is a complete, consistent topology. 
Furthermore the assumption that $f$ is scalarly $X^*$-measurable and almost $\norm{}$-separably 
valued imply that $f$ is strongly measurable
by the Pettis measurability theorem \cite[II.1, Theorem 2]{diesteluhl}. Thus in this case, if $f$ satisfies
the hypothesis of Theorem \ref{t.criteria}, then $f$ is Bochner integrable.

In Pettis measurability theorem the assumption of scalar $X^*$-measurability can actually be weakened to scalar $Y$-measurability for any norming subset $Y \subset X^*$, see Corollary 4 in II.1 of \cite{diesteluhl}. 
We note that since we only require the range of $f$ to be almost $\tau$-separable in Theorem \ref{t.criteria}, 
in the case of an arbitrary $Y$ we do not implicitly require that $f$ is strongly measurable.\medskip 

{\it The case $X=Y^*$ --} In this case, the $Y$-integral coincides with the $\mathrm{weak}^*$-integral.
Hence every scalarly $Y$-integrable function $f: \Omega \to Y^*$ is 
$Y$-integrable. We note that since closed, bounded balls in $Y^*$ are $\mathrm{weak}^*$-compact,
the $\mathrm{weak}^*$-topology is quasi-complete. We also note that in this case 
the separability assumption in Theorem \ref{t.criteria} is not needed.\medskip

The above examples are extensively studied in the literature. The following is our
basic example of a norming dual pair on which a complete, consistent topology exists.

\begin{example}\label{ex.cb}
 Let $E$ be a completely regular Hausdorff space and consider the norming dual pair
$(C_b(E),\mathcal{M}_0(E))$. Then, by Section 7.6 of \cite{jarchow}, the {\it strict topology}
is a consistent topology on $X$. It is complete if and only if $C(E)$, the space of {\emph all}
continuous functions on $E$, is complete with respect to the compact-open topology, see Section 3.6 of
\cite{jarchow}. If $E$ is metrizable or locally compact, this is certainly the case.
\end{example}

The question arises whether on {\it every} norming dual pair there exists a quasi-complete, consistent topology.
This question was answered to the negative by {\sc Bonet} and {\sc Cascales} \cite{boca}. 
In Section \ref{sect5} we will give a concrete example that the assertion of Theorem \ref{t.criteria}
may fail without the assumption that there exists a quasi-complete consistent topology.\medskip

The following result is useful in establishing $Y$-integrability. 

\begin{prop}\label{p.construct}
Let $\mathcal{E}$ be a generator of $\cF$ which is closed under finite intersections and 
$f: \Omega \to X$ be a scalarly $Y$-measurable function with the following properties:
\begin{enumerate}
\item There exists a measurable function $g$ such that $\norm{f}\leq g$;
\item There exists a sequence $(\Omega_n)_{n\in\CN} \subset \cF$ with $m(\Omega_n)<\infty$ for all $n \in \CN$ 
and $\bigcup_{n\in\CN} \Omega_n = \Omega$ such that the function $g$ from (i) satisfies $g\one_{\Omega_n}\in L^1(m)$ for all $n\in \CN$;
\item $x_A := \int_A f \, dm \in X$ for every $A \in \mathcal{E}\cup\{\Omega_n\, : \, n \in \CN \}$.
\end{enumerate}
Then, for every measurable function $\alpha : \Omega \to \CC$ with $|\alpha|g\in L^1(m)$,
the function $\alpha f$ is $Y$-integrable.
\end{prop}

\begin{proof} 
By Lemma \ref{l.yint}, $\alpha f$ is scalarly $Y$-integrable on $\Omega$. It suffices
to prove that its $Y$-integral over $\Omega$ belongs to $X$, as we can clearly replace $\alpha$
by $\alpha\cdot\one_A$ for any $A \in \cF$. 
We proceed in three steps.

{\it Step 1 --} 
Let $n\in \CN$ be arbitrary and let $\mathcal{D}_n$ denote the collection of
all sets $A \in \cF$ such that $\int_{A\cap\Omega_n}f\, dm \in X$.
By assumption (iii), $\mathcal{E} \subset \mathcal{D}_n$. Using Lemma \ref{l.dom}, it is easy to 
see that $\mathcal{D}_n$ is a Dynkin system. Hence $\mathcal{D}_n = \cF$ by Dynkin's $\pi$-$\lambda$ theorem. 

{\it Step 2 --} Now we prove the assertion for a simple function $\alpha$. 
By Step 1 and linearity, the $Y$-integral of $\alpha f$ over $\Omega_n$ is an element of $X$. 
By Lemma \ref{l.dom}, $\int_{\Omega_n}\alpha f\,dm \to \int_{\Omega}\alpha f\, dm$
in $Y^*$, hence $\int_{\Omega}\alpha f\, dm \in X$. 

{\it Step 3 --} Now let $\alpha$ be arbitrary. Then there exists a sequence of step functions
$(\alpha_k)_{k\in\CN}$ such that $|\alpha_k| \leq |\alpha|$ and $\alpha_k\to \alpha$ pointwise. By Step
2, $\int_{\Omega}\alpha_k f\, dm \in X$ for every $k$. Again by Lemma \ref{l.dom} 
it follows that $\int_{\Omega} \alpha f\, dm \in X$.
\end{proof}

\section{Semigroups and Their Laplace Transforms}\label{sect4}

\begin{defn}
Let $(X,Y)$ be a norming dual pair. A {\it semigroup}
on $(X,Y)$ is a family of operators $\mathbf{T} = (T(t))_{t\geq 0} \subset L(X, \sigma )$ 
such that $T(t+s) = T(t)T(s)$ for all $t,s\geq 0$ and $T(0) = id_X$. A semigroup
is called {\it exponentially bounded}, if there exist
$M\geq 1$ and $\omega \in \CR$ such that $\norm{T(t)}\leq Me^{\omega t}$.
In this case, we say that $\mathbf{T}$ is of {\it type} $(M,\omega )$. 
A semigroup of some type $(M,\omega )$ is called {\it integrable} if
$t\mapsto \dual{T(t)x}{y}$ is measurable for all $x \in X$ and $y \in Y$ and there
exists a complex number $\lambda_0$ with $\Re \lambda_0>\omega$ and an operator $R_0 \in L(X,\sigma )$
such that
\begin{equation}\label{eq.resolvent}
 \dual{R_0x}{y} = \int_0^{\infty} e^{-\lambda_0 t}\dual{T(t)x}{y}\, dt 
\quad , \quad \forall\, x \in X\, , \, y \in Y \,\, .
\end{equation}
\end{defn}

Two remarks are in order. Let us first note that for a fixed $\lambda_0$ there is at most
one operator $R_0 \in L(X, \sigma )$ such that \eqref{eq.resolvent} is satisfied. Second, note
that the definition of `integrable semigroup' is symmetric, i.e.\ if
$\mathbf{T}$ is an integrable semigroup on $(X,Y)$, then the $\sigma$-adjoint semigroup
$\mathbf{T}'$ is an integrable semigroup on $(Y,X)$. To see this note that if $R_0 \in L(X, \sigma )$,
then $R_0' \in L(Y, \sigma')$. Furthermore, we have $\dual{x}{R_0'y} = \int_0^\infty e^{-\lambda_0t}
\dual{x}{T(t)'y}\, dt$ for all $x\in X$ and $y \in Y$.\medskip

We will see in a moment that if $\mathbf{T}$ is an integrable semigroup, then for {\it every}
$\lambda$ with $\Re\lambda > \omega$ there exists an operator $R(\lambda ) \in L(X, \sigma )$
such that
\begin{equation}\label{eq.resolvent2}
 \dual{R(\lambda )x}{y}  = \int_0^\infty e^{-\lambda t}\dual{T(t)x}{y}\, dt \quad , \quad \forall\, x \in X\,,\,
y\in Y\,\, .
\end{equation}
Clearly, $R(\lambda_0 ) = R_0$. The family $\mathbf{R} := (R(\lambda ))_{\Re\lambda >
\omega}$ is called the {\it Laplace transform} of $\mathbf{T}$. 

It is well known that the Laplace transform of a strongly continuous semigroup is the resolvent of its
generator. Since we did not impose continuity assumptions, we cannot expect the Laplace
transform to be injective. In particular, it may not be the resolvent of an operator. However,
the following proposition shows that, similar as in \cite{are01}, the Laplace transform
of an integrable semigroup is a pseudo-resolvent. 

We will use freely some results about pseudo-resolvents and multivalued (m.v.\ for short) operators.
We refer the reader to \cite{are01} or Appendix A of \cite{haase} for more information.

\begin{prop}\label{p.pseudo}
Let $\mathbf{T}$ be an integrable semigroup of type $(M,\omega )$. 
Then there exists a pseudo-resolvent $(R(\lambda ))_{\Re\lambda >
\omega} \subset L(X, \sigma )$ 
such that \eqref{eq.resolvent2} holds for every $\Re\lambda > \omega$. Furthermore, every
$R(\lambda )$ commutes with every $T(t)$ and for $\Re\lambda > \omega$ and $k \in \CN$ we have 
$\norm{(\Re\lambda - \omega )^kR(\lambda )^k}\leq M$. 
\end{prop}

\begin{proof}
By the definition of `integrable semigroup' there exists some $\lambda_0 \in \{\Re\lambda > \omega\}$
and $R_0 \in L(X,\sigma )$ such that \eqref{eq.resolvent} holds. 
Define the m.v.\ operator $\mathcal{A}$ by $\mathcal{A} := \lambda_0 - R_0^{-1}$ and put
$R(\lambda ) := (\lambda - \mathcal{A})^{-1}$. Now define
\[ \Omega_0 := \{ \lambda \, : \Re\lambda >\omega\, ,\, R(\lambda ) \in L(X,\sigma )\,\,\mbox{and
 \eqref{eq.resolvent2} holds}\,\}\,\, .
\]
Then we have $\lambda_0 \in \Omega_0 \subset \Omega := \{\lambda\, :\, \Re\lambda > \omega\,,\, R(\lambda )
\in L(X)\, \}$. By \cite[Proposition A.2.3]{haase}, the $L(X)$-valued map $R : \Omega \to L(X)$ defines a pseudo-resolvent; 
in particular, $\Omega$ is open and  $R$ is holomorphic. 
More precisely, if $\lambda \in \Omega$ and $|\lambda - \mu | < \norm{R(\lambda )}^{-1}$, then $\mu \in 
\Omega$ and
\begin{equation}\label{eq.power}
 R(\mu ) = \sum_{k=0}^{\infty}(\lambda - \mu)^kR(\lambda )^{k+1} \,\, .
\end{equation}
Now fix $\lambda \in \Omega_0$ and $\mu \in B(\lambda , \norm{R(\lambda )}^{-1})$. 
Equation \eqref{eq.power} and Proposition \ref{p.operator} (ii) imply 
that $R(\mu ) \in L(X,\sigma )$. Now note that for any $\nu \in \Omega_0, \,x \in X$
and $y\in Y$ we have
\begin{eqnarray}
 \dual{R(\nu )^kx}{y} & = &
\int_{(0,\infty)^k} e^{-\nu (t_1+\cdots + t_k)}\dual{T(t_1+\cdots + t_k)x}{y}\,
dt_1\cdots dt_k\nonumber\\
& = & \int_0^\infty \frac{t^{k-1}}{(k-1)!}e^{-\nu t}\dual{T(t)x}{y}\, dt \label{eq.rk}\,\, .
\end{eqnarray}
Here the first equality follows from the semigroup law and the second equality is derived
from that fact that the $k$-fold convolution of exponential distributions is a gamma distribution.
Thus since $|\lambda -\mu|< \norm{R(\lambda )}^{-1}$, we have
\begin{eqnarray*}
\dual{R(\mu )x}{y} & = & \sum_{k=0}^\infty \dual{(\lambda - \mu )^k R(\lambda )^{k+1}x}{y}\\
& = & \int_0^\infty \sum_{k=0}^\infty \frac{((\lambda - \mu )t)^k}{k!}e^{-\lambda t}\dual{T(t)x}{y}\, dt\\
& = & \int_0^\infty e^{-\mu t}\dual{T(t)x}{y}\,dt \,\, , 
\end{eqnarray*}
for all $x\in X, y \in Y$. Hence $\mu \in \Omega_0$. Since $\lambda \in \Omega_0$ was arbitrary, it follows
that $\Omega_0$ is an open subset of $\Omega$.

Now assume that $(\lambda_n)_{n\in\CN}$ is a sequence in $\Omega_0$ converging to some $\lambda \in \Omega$.
Then $R(\lambda_n) \to R(\lambda )$ in the operator norm and hence $R(\lambda ) \in 
L(X,\sigma )$ by Proposition \ref{p.operator} (ii). 
Fix $\gamma > \omega$ such that $\Re\lambda_n > \gamma$ for all $n \in \CN$. Using the estimate
$|e^{-\lambda_n t}\dual{T(t)x}{y}| \leq Me^{(\omega - \gamma )t}\norm{x}\cdot \norm{y} \in L^1(0,\infty)$
for all $x\in X$ and $y \in Y$, we may infer from dominated convergence that $\dual{R(\lambda )x}{y}
= \int_0^\infty e^{-\lambda t}\dual{T(t)x}{y}\, dt$ for all $x \in X,y\in Y$. This proves
that $\Omega_0$ is closed in $\Omega$. It follows that $\Omega_0$ contains the connected
component of $\lambda_0$ in $\Omega$.

Let us prove now that $\Omega_0 = \{\lambda : \Re\lambda > \omega \}$. To that end, let
$(\lambda_n)_{n\in\CN}$ be a sequence in the connected component of $\lambda_0$ in $\Omega$ converging
to some $\lambda$ in the boundary of that component. By \cite[Proposition 3.5]{are01}, $\norm{R(\lambda_n)}$ 
must be unbounded.
If this is the case, we infer from the uniform boundedness principle that
we find some $x \in X$ and some $y \in Y$ such that $\dual{R(\lambda_n)x}{y}$ is unbounded. But this is
impossible unless $\Re\lambda = \omega$. Indeed, if $\Re\lambda > \omega$, then, similar as above, we find
\[ \limsup_{n\to\infty} |\dual{R(\lambda_n )x}{y}| \leq \limsup_{n\to\infty} M
\norm{x}
 \cdot \norm{y} \int_0^\infty e^{(\omega - \Re\lambda_n)t} \, dt < \infty \,\, .
\]
Hence we must have $\Re\lambda = \omega$ and, thus, $\Omega_0 = \{\lambda : \Re\lambda > \omega\}$. 

The fact that every $R(\lambda )$ commutes with every $T(t)$ is an easy consequence of Lemma
\ref{l.interchange} and the semigroup law. The estimate $\norm{(\Re\lambda - \omega )^kR(\lambda )^k}\leq M$
may be deduced from \eqref{eq.rk} and the exponential boundedness of $\mathbf{T}$.
\end{proof}

The question arises whether an integrable semigroup is uniquely determined by its Laplace transform. Without
further assumptions, this is not the case, not even if $Y=X^*$, see \cite{phillips}. We need the following

\begin{defn}\label{d.countably}
Let $X$ be a Banach space and $M$ be a subspace of $X$. A subset $W \subset X^*$ is said 
to {\it separate points in $M$} if for every $x\in M\setminus \{0\}$ there
exists $w\in W$ with $\dual{x}{w}\neq 0$. A norming dual pair $(X,Y)$ is 
said to be {\it countably separated}\index{countably separated} 
if there exists a countable subset of $X$ separating points in $Y$ and 
there exists a countable subset of $Y$ separating points in $X$.
\end{defn}

\begin{thm}\label{t.laplaceunique}
Let $\mathbf{T},\mathbf{S}$ be integrable semigroups on $(X,Y)$ of type $(M_T,\omega_T)$
and $(M_S, \omega_S)$ respectively. Suppose that for the corresponding Laplace transforms we have
$R_{\mathbf{T}}(\lambda ) = (\lambda - \mathcal{A})^{-1} = R_{\mathbf{S}}(\lambda )$ 
for some $\lambda> \max\{\omega_\mathbf{T}, \omega_{\mathbf{S}}\}$. 
Then $T(t) = S(t)$ for all $t\geq 0$, provided one of the following conditions is satisfied:
\begin{enumerate}
\item $D(\mathcal{A})$ is $\sigma$-dense in $X$; 
\item $(X,Y)$ is countably separated.
\end{enumerate}
\end{thm}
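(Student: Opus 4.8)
The plan is to show that the difference $\mathbf{T}-\mathbf{S}$ vanishes by first invoking scalar Laplace-transform uniqueness and then removing the resulting null sets with the help of the semigroup law. The common starting point, valid under either hypothesis, is the following. Fix $x\in X$ and $y\in Y$. Both $t\mapsto\dual{T(t)x}{y}$ and $t\mapsto\dual{S(t)x}{y}$ are measurable and, by exponential boundedness of type $(M,\omega)$, satisfy $|\dual{T(t)x}{y}|\le Me^{\omega t}\norm{x}\norm{y}$; moreover their Laplace transforms coincide, both being equal to $\dual{R(\lambda)x}{y}$ for $\Re\lambda>\omega$. By the uniqueness theorem for Laplace transforms of locally integrable, exponentially bounded scalar functions, I would conclude $\dual{T(t)x}{y}=\dual{S(t)x}{y}$ for almost every $t\ge0$. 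The catch is that the exceptional null set depends on the pair $(x,y)$, and the two hypotheses serve precisely to overcome this.

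Under hypothesis (i), I would localize the problem to $D(\mathcal{A})$. For every $h\ge0$ and $x\in X$, the a.e.\ identity yields $\int_0^he^{-\lambda t}\dual{T(t)x}{y}\,dt=\int_0^he^{-\lambda t}\dual{S(t)x}{y}\,dt$ for all $y$, so the two $Y$-integrals $\int_0^he^{-\lambda t}T(t)x\,dt$ and $\int_0^he^{-\lambda t}S(t)x\,dt$ agree as elements of $X$. Inserting this into the identity of Lemma \ref{l.pettis}, namely $\int_0^he^{-\lambda t}T(t)x\,dt=R(\lambda)x-e^{-\lambda h}T(h)R(\lambda)x$, the terms $R(\lambda)x$ cancel and one is left with $T(h)R(\lambda)x=S(h)R(\lambda)x$ for all $h\ge0$ and all $x$. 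As $x$ ranges over $X$, $R(\lambda)x$ ranges over $\rg R(\lambda)=D(\mathcal{A})$, so $T(h)$ and $S(h)$ agree on $D(\mathcal{A})$. Since $T(h),S(h)\in L(X,\sigma)$ are $\sigma$-continuous and $D(\mathcal{A})$ is $\sigma$-dense by assumption, a net argument (using that $\sigma$ is Hausdorff, as $Y$ separates points of $X$) gives $T(h)=S(h)$ on all of $X$.

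Under hypothesis (ii), I would instead collapse the dependence on $(x,y)$ to a single null set and then exploit the semigroup law. Let $\{x_m\}\subset X$ separate points of $Y$ and $\{y_n\}\subset Y$ separate points of $X$. Applying the a.e.\ identity to each pair $(x_m,y_n)$ and taking the union of the countably many null sets produces one null set $N$ such that, for every $t\notin N$, $\dual{T(t)x_m}{y_n}=\dual{S(t)x_m}{y_n}$ for all $m,n$. Fixing such a $t$ and using that $\{y_n\}$ separates $X$ gives $T(t)x_m=S(t)x_m$ for all $m$; since $\{x_m\}$ separates $Y$, its span is $\sigma$-dense, and $\sigma$-continuity of $T(t),S(t)$ then forces $T(t)=S(t)$. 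Thus $G:=\{t\ge0:T(t)=S(t)\}$ is co-null in $[0,\infty)$. The decisive observation is that $G$ is an additive sub-semigroup containing $0$: if $s,t\in G$ then $T(s+t)=T(s)T(t)=S(s)S(t)=S(s+t)$. For arbitrary $t_0>0$, the set $\{s\in(0,t_0):s\notin G\text{ or }t_0-s\notin G\}$ is contained in $\bigl(N\cup(t_0-N)\bigr)\cap(0,t_0)$ and hence null, so I may pick $s\in(0,t_0)$ with $s,\,t_0-s\in G$; then $t_0=s+(t_0-s)\in G$. Therefore $G=[0,\infty)$.

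The only genuinely delicate point is the passage from pointwise-a.e.\ equality of the scalar orbits to equality of the operators for \emph{every} $t$: the null set cannot be chosen uniformly in $(x,y)$ without an extra assumption, and the two conditions circumvent this in structurally different ways. I expect the steps demanding the most care to be the scalar Laplace-uniqueness result in the required generality (merely measurable, possibly discontinuous orbits) and the bookkeeping of $\sigma$-density and separation; once these are in place, the semigroup law in (ii) and the integrated resolvent identity of Lemma \ref{l.pettis} in (i) dispatch the respective arguments cleanly.
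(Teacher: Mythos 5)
Your proposal is correct and follows essentially the same route as the paper: scalar Laplace-transform uniqueness giving $(x,y)$-dependent null sets, Lemma \ref{l.pettis} to handle case (i) on $D(\mathcal{A})=\rg R(\lambda)$ followed by $\sigma$-density, and in case (ii) a countable union of null sets plus the semigroup law (your final Steinhaus-type argument is exactly the proof of the paper's Lemma \ref{l.nullset}, inlined). The only cosmetic differences are that in (i) the paper deduces continuity of the orbits on $D(\mathcal{A})$ so that the null set is empty rather than integrating and cancelling, and in (ii) it passes to the adjoints $T(t)'y=S(t)'y$ rather than invoking $\sigma$-density of the span of the separating family.
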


The proof uses the following lemma which is taken from \cite[Lemma 3.16.5]{abhn}.

\begin{lem}\label{l.nullset}
Let $M \subset (0, \infty )$ be a set of Lebesgue measure 0 and assume that
$t,s \not\in M$ implies $t+s\not\in M$. Then $M = \emptyset$.
\end{lem}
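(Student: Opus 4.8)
The plan is to argue by contradiction, exploiting the reflection-invariance of Lebesgue measure. First I would rephrase the hypothesis in contrapositive form: the implication ``$t,s\notin M \Rightarrow t+s\notin M$'' is logically equivalent to the statement that whenever $t+s\in M$, at least one of the summands $t,s$ must itself lie in $M$. This is the form in which the condition will actually be used.

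Next, I would suppose toward a contradiction that $M\neq\emptyset$ and fix some $t_0\in M$. For each $s\in(0,t_0)$ I would write $t_0 = s+(t_0-s)$, where both summands are strictly positive. Since $t_0\in M$, the contrapositive above forces $s\in M$ or $t_0-s\in M$. Writing $t_0-M := \{t_0-m : m\in M\}$ for the reflection of $M$ about $t_0/2$, this shows that
\[ (0,t_0)\subseteq \bigl(M\cap(0,t_0)\bigr)\cup\bigl((t_0-M)\cap(0,t_0)\bigr). \]

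The decisive step is then a measure estimate. Since $M$ is Lebesgue null and Lebesgue measure is invariant under the reflection $x\mapsto t_0-x$, the set $t_0-M$ is also Lebesgue null; hence the right-hand side of the inclusion above is a null set. This forces $(0,t_0)$ to have Lebesgue measure zero, contradicting the fact that its measure equals $t_0>0$. Therefore no such $t_0$ can exist and $M=\emptyset$.

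I do not anticipate any genuine obstacle here, as the argument is short and entirely elementary. The only points requiring a little care are the passage to the contrapositive of the defining implication and the observation that the reflection $x\mapsto t_0-x$ preserves Lebesgue measure; together these are exactly what convert the additive (semigroup-type) hypothesis into a covering of the interval $(0,t_0)$ by two null sets, which is the heart of the contradiction.
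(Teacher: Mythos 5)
Your proof is correct and complete: the contrapositive gives $(0,t_0)\subseteq \bigl(M\cap(0,t_0)\bigr)\cup\bigl((t_0-M)\cap(0,t_0)\bigr)$, and reflection-invariance of Lebesgue measure makes the right-hand side null, contradicting $t_0>0$. The paper itself gives no proof, merely citing Lemma 3.16.5 of Arendt--Batty--Hieber--Neubrander, and your argument is exactly the standard one found there.
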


\begin{proof}[Proof of Theorem \ref{t.laplaceunique}]
As a consequence of Proposition \ref{p.pseudo}, we have $(\lambda - \mathcal{A})^{-1} = R_\mathbf{T}(\lambda ) = 
R_\mathbf{S}(\lambda ) \in L(X, \sigma )$ for all $\lambda > \max\{\omega_T, \omega_S\}$.
Hence, for such $\lambda$ and any $x\in X$ and $y \in Y$ we have
\[ \int_0^{\infty} e^{-\lambda t}\dual{T(t)x}{y} \, dt
= \int_0^{\infty} e^{-\lambda t}\dual{S(t)x}{y} \, dt \,\, .\]

By the uniqueness theorem for Laplace transforms \cite[Theorem 1.7.3]{abhn}, there
exists a set $N(x,y)$ of Lebesgue measure zero such that $\dual{T(t)x}{y} = \dual{S(t)x}{y}$ 
for all $t\not\in N(x,y)$.

First assume (i). Note that for every $\Re\lambda > \omega\, ,\,u \in X$ and $y \in Y$ we have
\begin{eqnarray*}
\dual{T(t)R_{\mathbf{T}}(\lambda )u}{y} & = & \int_0^\infty e^{-\lambda s} \dual{T(t+s)u}{y}\, ds = e^{\lambda t}
\int_t^\infty e^{-\lambda r}\dual{T(r)u}{y}\, dr\\
& = & e^{\lambda t}\left(\dual{R_{\mathbf{T}} (\lambda )u}{y} - \int_0^te^{-\lambda r}\dual{T(r)u}{y}\, dr \right) \,\, , 
\end{eqnarray*}
and thus
\begin{equation}\label{eq.tint} 
 \int_0^te^{-\lambda r}\dual{T(r)u}{y}\, dr = \dual{R_\mathbf{T} (\lambda )u}{y} - e^{-\lambda t}
\dual{T(t)R_{\mathbf{T}}(\lambda )u}{y}
 \,\, .
\end{equation}
Now let $x \in D(\mathcal{A}) = \rg R_{\mathbf{T}}(\lambda )$, say
$x = R_{\mathbf{T}}(\lambda )z$. Then the above equation for $u = z$ and arbitrary $y \in Y$ yields
\[ \dual{T(t)x}{y} = e^{\lambda t}\left(\dual{x}{y} - 
\int_0^t e^{-\lambda r} \dual{T(r)z}{y} \, dr \right) \,\, ,\]
implying that $t \mapsto \dual{T(t)x}{y}$ is continuous.
The same applies to the corresponding orbit of $\mathbf{S}$ and we find $N(x,y) = \emptyset$. Thus 
$T(t)x = S(t)x$ for every $t \geq 0$ and $x \in D(\mathcal{A})$. 
However, if the $\sigma$-continuous linear operators $T(t)$ and $S(t)$ coincide on
the $\sigma$-dense subspace $D(\mathcal{A})$, then they are equal.

Now assume that (ii) is satisfied. 
Let $\{x_n\}_{n\in\CN} \subset X$ and $\{y_n\}_{n\in\CN} \subset Y$
be countable subsets separating points in $Y$ and $X$ respectively.
Fix $x \in X$ and put $N(x) = \bigcup_{n\in \CN}N(x,y_n)$. 
Then $N(x)$ is a null set and
\[ \dual{T(t)x}{y_n} = \dual{S(t)x}{y_n} \quad \forall \,t \not\in N(x)
\,\,,\,\, n\in \CN \,\, .\]
Since $\{y_n\}$ separates points, $T(t)x = S(t)x$ for all
$t\not\in N(x)$. In particular, $\dual{T(t)x}{y} = 
\dual{S(t)x}{y}$ for all $t\not\in N(x)$ and all $y\in Y$.

Now fix $y\in Y$ and put $N = \bigcup_{n\in \CN} N(x_n)$.
Then $N$ has measure 0 and for $t \not \in N$ and $n\in\CN$ we have
\[ \dual{x_n}{T(t)'y} = \dual{T(t)x_n}{y} = \dual{S(t)x_n}{y}
= \dual{x_n}{S(t)'y} \,\, .\]
As $\{x_n\}$ separates points, it follows that $T(t)'y = S(t)'y$ for all $t\not \in N$.
Since $y$ was arbitrary, $T(t) = S(t)$ for all $t \not\in N$. 
Now let $M = \{ t \, : \, T(t) \neq S(t) \}$. Then $M \subset N$, showing that $M$ has measure $0$. 
However, if $t,s\not\in M$ then, by the semigroup law,
$t+s\not\in M$. Thus Lemma \ref{l.nullset} implies $M = \emptyset$.
\end{proof}

\begin{rem}
 It is proved in \cite[Theorem 2.10]{k09b} that if $\mathbf{T}$ is {\it $\sigma$-continuous at 0},
i.e.\ $T(t)x \weak x$ as $t\downarrow 0$ for every $x \in X$, then $\rg R(\lambda ) = D(\mathcal{A})$
is $\sigma$-dense in $X$. Hence very mild continuity assumptions ensure that condition (i) in
Theorem \ref{t.laplaceunique} is satisfied.
\end{rem}

We now generalize a result from the theory of strongly
continuous semigroups, cf. \cite[Proposition 3.1.9]{abhn}. Note that in our situation 
the operator $\mathcal{A}$ may be multivalued.

\begin{prop}\label{p.rangecharacterisation}
Let $\mathbf{T}$ be an integrable semigroup on $(X,Y)$ with Laplace transform $\mathbf{R}$
and let $\mathcal{A}$ be the unique
m.v.\ operator such that $R(\lambda ) = (\lambda - \mathcal{A})^{-1}$.
\begin{enumerate}
\item The following are equivalent.
\begin{enumerate}
\item $x \in D(\mathcal{A})$ and $ z \in \mathcal{A}x $;
\item For every $t > 0$ we have $\int_0^t T(s)z \, ds = T(t)x - x$.
\end{enumerate}
\item We have $\int_0^t\, T(s)x\, ds \in D(\mathcal{A})$ and
$T(t)x - x \in \mathcal{A}\int_0^t T(s)x\, ds$ for every $x \in X$ and $t>0$.
\end{enumerate}
\end{prop}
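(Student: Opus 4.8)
The plan is to read off from Lemma~\ref{l.pettis} the defining relation of the multivalued generator and then bootstrap from the regular subspace $D(\mathcal A)=\rg R(\lambda)$ to all of $X$. Recall that, since $R$ is a pseudoresolvent, the operator $\mathcal A$ is characterised by $z\in\mathcal A x \iff x = R(\lambda)(\lambda x - z)$ for one (equivalently every) $\lambda$ with $\Re\lambda>\omega$, and $D(\mathcal A)=\rg R(\lambda)$; see Appendix~A of \cite{haase}. I would treat (i) first and then deduce (ii) by applying (i) to the pair $\big(\int_0^t T(s)x\,ds,\ T(t)x-x\big)$.

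For (i), (a)$\Rightarrow$(b): writing $z\in\mathcal A x$ as $x=R(\lambda)u$ with $u=\lambda x - z$, I would insert this into \eqref{eq.0hintegral} to obtain $e^{-\lambda s}\dual{T(s)x}{y}=\dual{x}{y}-\int_0^s e^{-\lambda r}\dual{T(r)u}{y}\,dr$ for every $y\in Y$. The right-hand side is absolutely continuous in $s$, so differentiating and multiplying by $e^{\lambda s}$ yields $\frac{d}{ds}\dual{T(s)x}{y}=\dual{T(s)(\lambda x-u)}{y}=\dual{T(s)z}{y}$; integrating from $0$ to $t$ and using that $y$ is arbitrary gives (b). For (b)$\Rightarrow$(a): relation (b) makes $t\mapsto\dual{T(t)x}{y}$ absolutely continuous, so in the defining integral \eqref{eq.resolvent} for $R(\lambda)z$ I would integrate by parts in $t$; the boundary term at infinity vanishes because $\Re\lambda>\omega$, and the one at $0$ produces $-\dual{x}{y}$, leaving $\dual{R(\lambda)z}{y}=\lambda\dual{R(\lambda)x}{y}-\dual{x}{y}$. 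Hence $x=R(\lambda)(\lambda x-z)$, which is exactly (a).

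For (ii) the first task is to show that $w:=\int_0^t T(s)x\,ds$, a priori an element of $Y^*$, actually lies in $X$. Here I would exploit (i): for $x_0\in D(\mathcal A)$ the already-proven implication (a)$\Rightarrow$(b) gives $T(\cdot)x_0-x_0=\int_0^\cdot T(s)z_0\,ds$ with values in $X$, and the estimate \eqref{eq.integralestimate} shows $\norm{T(t)x_0-T(t')x_0}\to 0$; thus orbits starting in $D(\mathcal A)$ are norm-continuous and their $Y$-integrals are genuine Bochner integrals lying in $X$. Applying this to $x_0=R(\lambda)x$ gives $\int_0^t T(s)R(\lambda)x\,ds\in X$; on the other hand, choosing $z_0=\lambda R(\lambda)x-x\in\mathcal A(R(\lambda)x)$ in (a)$\Rightarrow$(b) yields $\lambda\int_0^t T(s)R(\lambda)x\,ds-w=T(t)R(\lambda)x-R(\lambda)x\in X$, whence $w\in X$ as well. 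With $w\in X$ in hand I would verify condition (b) of part~(i) for the pair $(w,\,T(t)x-x)$: for $\tau>0$ a change of variables shows $\int_0^\tau T(r)(T(t)x-x)\,dr=\int_t^{t+\tau}T(s)x\,ds-\int_0^\tau T(s)x\,ds$, while Lemma~\ref{l.interchange} gives $T(\tau)w-w=\int_\tau^{t+\tau}T(s)x\,ds-\int_0^t T(s)x\,ds$, and additivity of the $Y$-integral over intervals shows these agree. Part~(i) then delivers $w\in D(\mathcal A)$ and $T(t)x-x\in\mathcal A w$.

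The main obstacle is the membership $w\in X$: because orbits $s\mapsto T(s)x$ of a merely integrable semigroup need be neither norm-continuous nor strongly measurable, one cannot integrate them in the Bochner sense or approximate by Riemann sums, and a priori $w$ only lives in $Y^*$. The device that resolves this is to pass through the regular subspace $D(\mathcal A)$, where orbits \emph{are} norm-continuous by part~(i), and to transport this regularity to general $x$ via the resolvent identity above; alternatively one could integrate by parts against the norm-continuous primitive $s\mapsto\int_0^s e^{-\lambda r}T(r)x\,dr$, which lies in $X$ by Lemma~\ref{l.pettis}. The scalar changes of variables and the Fubini/integration-by-parts steps are then routine once justified by pairing with arbitrary $y\in Y$.
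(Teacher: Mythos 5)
Your proof is correct. For part (i) it follows essentially the same route as the paper: both directions are read off from Lemma~\ref{l.pettis}; where you differentiate the absolutely continuous identity $e^{-\lambda s}\dual{T(s)x}{y}=\dual{x}{y}-\int_0^s e^{-\lambda r}\dual{T(r)(\lambda x-z)}{y}\,dr$ and integrate back up, the paper instead observes that both sides of the resulting identity are entire in $\lambda$ and evaluates at $\lambda=0$; and where you integrate by parts for (b)$\Rightarrow$(a), the paper uses Fubini. These are cosmetic differences. For part (ii), however, your argument is genuinely different and in one respect sharper. The paper computes $\int_0^t\dual{T(s)x}{y}\,ds$ by writing $x=(\lambda-\mathcal{A})R(\lambda)x$, and at the key moment asserts that ``appealing again to part (i)'' there is $x_0\in X$ with $\dual{x_0}{y}=\int_0^t\dual{T(s)R(\lambda)x}{y}\,ds$; part (i) as stated only controls integrals of elements of $\rg\mathcal{A}$, so taken literally this step gives $\lambda\int_0^t T(s)R(\lambda)x\,ds-\int_0^t T(s)x\,ds\in X$, which relates the two unknown integrals without separating them. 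Your device --- that orbits through $D(\mathcal{A})$ are norm-continuous by (i)(a)$\Rightarrow$(b) together with \eqref{eq.integralestimate}, hence Bochner integrable with integral in $X$, after which $w=\int_0^tT(s)x\,ds\in X$ follows from the identity $\lambda\int_0^tT(s)R(\lambda)x\,ds-w=T(t)R(\lambda)x-R(\lambda)x$ --- supplies exactly the missing separation, and is the cleanest way I know to justify this point with the paper's tools (your alternative via Theorem~\ref{t.construct} applied to $e^{\lambda s}\one_{(0,t]}\cdot e^{-\lambda s}T(s)x$ also works). Your final step, verifying criterion (i)(b) for the pair $\bigl(w,\,T(t)x-x\bigr)$ by the change of variables and additivity of the $Y$-integral, replaces the paper's direct manipulation of the resolvent identity and delivers the same conclusion. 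The only nitpick: in your integration by parts for (b)$\Rightarrow$(a), the boundary term at $0$ actually vanishes (since $\Phi(0)=0$); the $-\dual{x}{y}$ comes from the term $\lambda\int_0^\infty e^{-\lambda t}\bigl(-\dual{x}{y}\bigr)\,dt$. This does not affect the result.
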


\begin{proof}
We first note that (i) (a) is equivalent to $x = R(\lambda )(\lambda x -z)$.

(i) (a) $\Rightarrow$ (b): 
Fix $t>0$ and $y \in Y$ and define the analytic functions $f,g : \CC \to \CC$ by
\begin{eqnarray*}
f(\lambda ) &:=& \lambda \int_0^t e^{-\lambda s}\dual{T(s)x}{y} \, ds -
\int_0^te^{-\lambda s}\dual{T(s)z}{y} \, ds\\
g(\lambda ) & := & \dual{x}{y} - e^{-\lambda t}\dual{T(t)x}{y} \,\, . 
\end{eqnarray*}
Setting $u = x = R(\lambda) (\lambda x -z)$ in \eqref{eq.tint}, it follow that $f(\lambda ) = g(\lambda )$ for
all $\Re\lambda > \omega$. The uniqueness theorem for analytic functions yields $f(0) = g(0)$. 
As $t$ and $y$ were arbitrary, (b) is proved.

(b) $\Rightarrow$ (a): If $\int_0^t T(s)z \, ds = T(t)x - x$, then
\begin{eqnarray*}
 \lambda R(\lambda )x - x & = & \int_0^{\infty}\lambda e^{-\lambda t}\left(
T(t)x - x\right)\, dt
=  \int_0^{\infty}\lambda e^{-\lambda t} \int_0^t T(s)z\, ds \, dt\\
& = & \int_0^{\infty}\int_s^{\infty}\lambda e^{-\lambda t} T(s)z \, dt
\, ds = \int_0^{\infty}e^{-\lambda s} T(s)z \, ds = R(\lambda )z \,\, .
\end{eqnarray*}
It follows that $x = R(\lambda )(\lambda x -z)$. 

(ii) Considering integrals as elements of $Y^*$ at first, we have
\begin{eqnarray*}
\int_0^tT(s)x \, ds & = &
\int_0^t T(s)(\lambda - \mathcal{A})R(\lambda )x \, ds\\
& = & \lambda \int_0^t T(s)R(\lambda )x \, ds - \int_0^t
T(s)\mathcal{A}R(\lambda )x \, ds\\
& = & \lambda \int_0^tT(s)R(\lambda )x \, ds
+ R(\lambda )x - T(t)R(\lambda )x \,\, ,
\end{eqnarray*}
where we have used $R(\lambda) x \in D(\mathcal{A})$ and part (i)
in the last step. Furthermore, in slight abuse of notation, we wrote $\mathcal{A}R(\lambda )x$ in
place of an element in this set. Now note that $\int_0^tT(s)R(\lambda )x \, ds \in X$ by part (i),
hence also $\int_0^tT(s)x\, ds \in X$ by the above equation.
Now Lemma \ref{l.interchange} yields 
\[ \int_0^t T(s)x \, ds = R(\lambda )\left( \lambda \int_0^t T(s)x \, ds +x - T(t)x 
\right) \, , \]
which is equivalent to (ii).
\end{proof}

\begin{thm}\label{t.measurabledescription}
Let $\mathbf{T}$ be a semigroup of type $(M,\omega )$ on the norming dual pair $(X,Y)$.
The following are equivalent:
\begin{enumerate}
\item $\mathbf{T}$ is an integrable semigroup;
\item For every $x\in X$ the orbit $T(\cdot )x$ is locally $Y$-integrable and for every $y \in Y$ 
the orbit $T(\cdot )'y$ is locally $X$-integrable. Here `local $X/Y$-integrability' means 
$X/Y$-integrability on every bounded interval in $\CR^+$.
\end{enumerate}
\end{thm}

\begin{proof}
(i) $\Rightarrow$ (ii): As a consequence of Proposition \ref{p.rangecharacterisation} (ii), $\int_a^b T(t)x \, dt \in X$ 
for every $x \in X$ and $0\leq a < b < \infty$. As such intervals generate the Borel $\sigma$-algebra on $(0,\infty)$ 
and are closed under finite intersections, it follows from Proposition \ref{p.construct} that $T(\cdot )x$ is locally 
$Y$-integrable. Applying the same arguments to $T(\cdot )'y$ for every $y \in Y$, (ii) follows.

(ii) $\Rightarrow$ (i): Fix $\lambda$ with $\Re\lambda > \omega$. It follows from (ii) and Proposition \ref{p.construct} 
that there exists an element $R(\lambda )x \in X$ such that
$ R(\lambda )x = \int_0^{\infty} e^{-\lambda t}T(t)x\, dt$. 
It remains to prove that $R(\lambda ) \in L(X, \sigma )$. It is easy to see that $R(\lambda )$ is linear. 
Furthermore, using the exponential boundedness of $\mathbf{T}$ and the dominated convergence theorem, 
it follows that $R(\lambda ) \in L(X)$. However, arguing similar, it follows that there exists $V(\lambda ) \in L(Y)$ 
such that $V(\lambda )y = \int_0^{\infty} e^{-\lambda t} T(t)'y \,dt$.
It is easily seen that $\dual{R(\lambda )x}{y} = \dual{x}{V(\lambda )y}$, hence
$V(\lambda ) = R(\lambda )^*|_{Y}$. Proposition \ref{p.operator} implies
$R(\lambda ) \in L(X, \sigma )$. This proves (i).
\end{proof}

We end this section with the following

\begin{lem}\label{l.expb}
Let $\mathbf{T}$ be a semigroup on the norming dual pair $(X,Y)$ which is 
$\sigma$-continuous at $0$. Then $T$ is exponentially bounded.
\end{lem}

\begin{proof}
Let us first prove that $\sigma$-continuity at $0$ implies $\sup_{0\leq t\leq 1}\norm{T(t)}<
\infty$. To that end, observe that for any $x \in X$ there exists $\eps_x$ such that
$A_x := \{\,\norm{T(t)x}\, : \, 0\leq t\leq \eps_x \}$ is bounded. Indeed,
if this was wrong, there exists a sequence $t_n\downarrow 0$ such that
$\norm{T(t_n)x}$ is unbounded. However, as $T(t_n)x \weak x$, the set
$\{ T(t_n)x \}$ has to be $\sigma$-bounded and hence, by Proposition \ref{p.bounded},
norm-bounded -- a contradiction. Now the semigroup law implies that
$\{ T(t)x \,: \, 0 \leq t \leq 1\} \subset A_x\cup T(\eps_x)A_x \cup\cdots\cup
T(\eps_x)^kA_x$ for some $k\in \CN$. As all operators $T(t)$ are bounded,
it follows that $\{T(t)x\,:\, 0 \leq t\leq 1\}$ is bounded. By the uniform
boundedness principle, $\sup_{0\leq t \leq 1}\norm{T(t)} =: M< \infty$. 
Now let $\omega = \log M$. For $t\geq 0$ split $t = n+r$ for some $n \in\CN_0$ and
$r\in [0,1)$. Then $\norm{T(t)} = \norm{T(r)T(1)^n}\leq Me^{\omega n} \leq M e^{\omega t}$.
\end{proof}

\section{Integrable semigroups on $(C_b(E), \mathcal{M}_0(E))$}\label{sect5}
We now turn to the problem of integrability of transition semigroups. As we will not use  
positivity or contractivity, we will consider general semigroups of kernel operators.
Taking Theorem \ref{t.extend} into account, this is exactly the same 
as a semigroup on the norming dual pair $(B_b(E), \mathcal{M}(E))$. Our first
result states that measurability and integrability extends from $(X, \mathcal{M}_{(0)}(E))$ to
$(B_b(E), \mathcal{M}_{(0)}(E))$ if $X$ is a $\mathcal{M}_{(0)}(E)$-transition space for $E$.

\begin{lem}\label{l.integration}
Let $(\Omega, \cF, m)$ be a $\sigma$-finite measure space,
$(E,\Sigma )$ be a measurable space and let $\mathcal{M}_{(0)}(E)$ denote either $\mathcal{M}(E)$
or  $\mathcal{M}_0(E)$ (in the latter case, assume additionally that $E$ is a completely regular Hausdorff space). 
We write $\sigma$ instead of $\sigma (B_b(E), \mathcal{M}_{(0)}(E) )$. 
Let $T: \Omega \to L(B_b(E), \sigma )$ and  $X$ be a $\mathcal{M}_{(0)}$-transition space for $E$.
\begin{enumerate}
\item $T(\cdot )f$ is scalarly $\mathcal{M}_{(0)}(E)$--measurable for every $f \in B_b(E)$
if and only if $T(\cdot )f$ is scalarly $\mathcal{M}_{(0)}(E)$-measurable for every $f \in X$.
\item Assume additionally, that $\norm{T}$ is majorized by an integrable
function. Then $T(\cdot )f$ is $\mathcal{M}_{(0)}$-integrable for every 
$f \in B_b(E)$ if and only if  $T(\cdot )f$ is $\mathcal{M}_{(0)}$-integrable for every $f \in X$. 
\end{enumerate}
\end{lem}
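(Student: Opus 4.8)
The plan is to prove both equivalences by exploiting the special structure of a $\mathcal{M}_{(0)}(E)$-transition space $X$, namely that $X$ is a Stonean vector lattice generating $\Sigma$ and containing an approximating sequence $f_n \uparrow \one$. In both statements, the forward implication (from $B_b(E)$ to $X$) is trivial, since $X \subset B_b(E)$ and the duality is the restriction; so the real content is the reverse direction. The guiding principle is a monotone class / Dynkin system argument: I will collect the set of $f \in B_b(E)$ for which the desired property ($\mathcal{M}_{(0)}(E)$-measurability or Pettis integrability of $T(\cdot)f$) holds, and show it contains $X$, is closed under the operations needed, and hence exhausts $B_b(E)$.

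For part (i), fix $\mu \in \mathcal{M}_{(0)}(E)$ and consider the complex measure-valued assignment $\omega \mapsto \dual{T(\omega)\one_A}{\mu}$. Since each $T(\omega) \in L(B_b(E),\sigma)$ is by Proposition \ref{p.operator} a bounded operator whose adjoint preserves $\mathcal{M}_{(0)}(E)$, I can write $\dual{T(\omega)f}{\mu} = \dual{f}{T(\omega)'\mu}$ and reduce measurability in $\omega$ to measurability of $\omega \mapsto \dual{f}{\nu(\omega)}$ where $\nu(\omega) := T(\omega)'\mu$. First I would show the collection $\mathcal{H}$ of sets $A \in \Sigma$ for which $\omega \mapsto \dual{\one_A}{\nu(\omega)}$ is measurable forms a Dynkin system: complements use $\dual{\one_{A^c}}{\nu} = \dual{\one_E}{\nu} - \dual{\one_A}{\nu}$ (with $\one_E$ handled via the approximating sequence $f_n \uparrow \one$ through dominated convergence), and countable disjoint unions use $\sigma$-additivity of $\nu(\omega)$ together with dominated convergence. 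Since each $f \in X$ is $\mathcal{M}_{(0)}$-measurable by hypothesis, every $A \in \mathcal{E}(X)$ lies in $\mathcal{H}$; as $\mathcal{E}(X)$ is closed under finite intersections and generates $\sigma(X) = \Sigma$ by the cited Lemma 39.4 of \cite{bauer}, it follows that $\mathcal{H} = \Sigma$. Standard approximation of an arbitrary $f \in B_b(E)$ by simple functions then transfers measurability to all of $B_b(E)$.

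For part (ii), I would build on part (i): under the majorization hypothesis, Lemma \ref{l.yint} already guarantees that $T(\cdot)f$ is $\mathcal{M}_{(0)}(E)$-integrable for every $f \in B_b(E)$, so the only thing to verify is that the $\mathcal{M}_{(0)}(E)$-integral over each $A \in \cF$ lands back in $B_b(E)$. The natural tool is Theorem \ref{t.construct} applied to the norming dual pair $(B_b(E), \mathcal{M}_{(0)}(E))$: its hypotheses (i) and (ii) are immediate from the majorization and $\sigma$-finiteness, and hypothesis (iii) requires producing, for generators $A$ of $\cF$ together with the exhausting sets $\Omega_n$, an element $x_A \in B_b(E)$ realizing the integral. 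Here I would run another Dynkin-system argument, this time over $E$ rather than over $\Omega$: for a generating $A \subset \Omega$, the function $x \mapsto \int_A \dual{T(\omega)\,\cdot\,}{\delta_x}\,dm(\omega)$ should be shown to define an element of $B_b(E)$ by first establishing it for $f \in X$ (where Pettis integrability on $(X,\mathcal{M}_{(0)}(E))$ is assumed), then extending through $\mathcal{E}(X)$, complements, and countable unions via dominated convergence as in part (i).

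The main obstacle I anticipate is bookkeeping the two distinct levels of integration and measurability that are intertwined: measurability/integrability in the parameter $\omega \in \Omega$ versus membership of the resulting integral as a function on $E$. The subtle point is that the integral $\int_A T(\omega)f\,dm$ a priori lives only in $\mathcal{M}_{(0)}(E)^* = Y^*$, and I must descend it into $B_b(E) \subset \mathcal{M}_{(0)}(E)^*$; the transition-space structure is precisely what lets me identify the candidate function on $E$ pointwise (via Dirac measures $\delta_x$ in the case $\mathcal{M}_{(0)} = \mathcal{M}$, or via the density argument underlying Theorem \ref{t.construct} in general) and then verify it is the genuine Pettis integral. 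Making sure the same function $x_A$ works simultaneously for all $f$ in the relevant class, and that the dominated convergence estimates are uniform enough to pass to the generated $\sigma$-algebra, is where the care is needed; the rest is routine application of Theorem \ref{t.construct} and Lemma \ref{l.yint}.
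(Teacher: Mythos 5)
Your proposal follows essentially the same route as the paper: in both parts one runs a Dynkin-system argument over $\Sigma$, showing that the class of sets $A\subset E$ for which $T(\cdot)\one_A$ has the desired property contains $\mathcal{E}(X)$ (via the approximating sequences $f_n\uparrow\one_A$, the $\sigma$-continuity of each $T(\omega)$ and dominated convergence) and is a Dynkin system, and then passes to all of $B_b(E)$ by linearity and uniform approximation by simple functions. Two small remarks: the detour through Theorem \ref{t.construct} in part (ii) is superfluous, since your Dynkin argument over $E$ already produces $\int_A T(\omega)f\,dm\in B_b(E)$ for every $A\in\cF$ simultaneously; and the hypothesis of (ii) is Pettis integrability on $(B_b(E),\mathcal{M}_{(0)}(E))$ for $f\in X$ (the integrals need only land in $B_b(E)$, not in $X$), which is in fact all your argument uses.
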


\begin{proof}
(i) Assume that $T(\cdot )f$ is scalarly $\mathcal{M}_{(0)}$-measurable for every $f \in X$
and define 
\[ \mathcal{G} := \{ A \in \Sigma \, : \, T(\cdot )\one_A\,\,\,
\mbox{is scalarly $\mathcal{M}_{(0)}$-measurable}\, \} \,\, .\]
If $\one_A = \sup f_n$ for some sequence $(f_n)_{n\in\CN} \subset X$, then $T(\omega )f_n
\weak T(\omega )f$ for all $\omega \in \Omega$ by the $\sigma$-continuity
of $T(\omega )$. Hence, for any $\mu \in \mathcal{M}_{(0)}(E)$, we have
$\dual{T(\cdot )\one_A}{\mu} = \lim \dual{T(\cdot )f_n}{\mu}$. This
proves that $\dual{T(\cdot )\one_A}{\mu}$ is measurable. It follows
that $\mathcal{E}(X) \subset \mathcal{G}$. It is easy to see that $\mathcal{G}$
is a Dynkin system and thus $\mathcal{G} = \Sigma$. By linearity, 
$T(\cdot )f$ is $\mathcal{M}_{(0)}$-measurable for every simple function $f$. Approximating
an arbitrary function by a sequence of simple functions and using the 
$\sigma$-continuity of the operators $T(\cdot )$ again, the assertion follows.

(ii) Scalar $\mathcal{M}_{(0)}$-measurability of $T(\cdot )f$ for all $f \in B_b(E)$
follows from (i). To prove $\mathcal{M}_{(0)}$-integrability, we proceed as in (i). Define
\[ \mathcal{G} := \{ A \in \Sigma \, : \, T(\cdot )\one_A \,\, \, \mbox{is
$\mathcal{M}_{(0)}$-integrable}\, \}\,\, .\]
If $\one_A = \sup f_n$ for a sequence $(f_n)_{n\in\CN} \subset X$, then 
it follows from Lemma \ref{l.dom} that $A \in \mathcal{G}$. Hence $\mathcal{E}(X) \subset \mathcal{G}$. 
The rest of the proof is similar as in (i).
\end{proof}

We now consider semigroups of kernel operators on $C_b(E)$.

\begin{thm}\label{t.integrablecb}
Let $E$ be a completely regular Hausdorff space and $\mathbf{T}$ be a semigroup on $(C_b(E), \mathcal{M}_0(E))$
which is $\sigma$-continuous at $0$.
\begin{enumerate}
 \item If the strict topology on $C_b(E)$ is complete (cf.\ Example \ref{ex.cb}) then, for every $f\in C_b(E)$,
the orbit $T(\cdot )f$ is locally $\mathcal{M}_0$-integrable.
 \item If $E$ is a complete metric space, then $\mathbf{T}$ is integrable.
\end{enumerate}
\end{thm}

\begin{proof}
By Lemma \ref{l.expb}, the semigroup $\mathbf{T}$ is exponentially bounded, say
of type $(M,\omega )$. Furthermore, since every operator $T(t)$ is $\sigma$-continuous, the semigroup law and 
the $\sigma$-continuity at 0 imply that $t\mapsto \dual{T(t)f}{\mu}$ is right continuous
for every $\mu \in \mathcal{M}_0$ and $f \in C_b(E)$. In particular, for every $f\in C_b(E)$ 
the orbit $T(\cdot )f$ is $\mathcal{M}_0(E)$-measurable and the range of this function is $\sigma$-separable 
and hence, as a consequence of the Hahn-Banach theorem, separable with respect to any consistent topology. 
Now (i) follows from Theorem \ref{t.criteria}.\medskip

To prove (ii), note that if $E$ is a complete metric space, 
then the strict topology on $C_b(E)$ is complete, hence (i)
may be used. In view of Theorem \ref{t.measurabledescription}, 
to prove (ii) it suffices to prove that $T(\cdot )'\mu$ is locally $C_b(E)$-integrable, for every 
$\mu \in \mathcal{M}_0(E)$. Fix $\mu \in \mathcal{M}_0(E)$. Since $E$ is a metric space, 
$C_b(E)$ is a $\mathcal{M}_0$-transition space for $E$, and hence every $T(t)$ is a kernel operator by Proposition 
\ref{t.extend}. In particular, it has a unique extension to an operator $\tilde{T}(t) \in L(B_b(E), \sigma )$. 
We infer from Lemma \ref{l.integration} (i) that $t\mapsto \dual{f}{T(t)'\mu} = 
\langle \tilde{T}(t)f,\mu\rangle$ is measurable for every $f \in B_b(E)$. 
Now let $S \subset [0,\infty)$ be a bounded, measurable set.
By Lemma \ref{l.yint}, the $B_b$-integral $\varphi := \int_S T(t)'\mu \, dt$ is sequentially 
$\sigma (\mathcal{M},B_b)$-continuous. If we put $\rho (A)
= \varphi (\one_A)$, then it follows from sequential continuity that $\rho$ is a measure. 
Clearly $\varphi (f) = \int_Sf\, d\rho$ for all $f \in B_b(E)$. 

It remains to prove that
$\rho \in \mathcal{M}_0(E)$. Since $E$ is a complete metric space, a measure on $E$ is a Radon measure 
if and only if it has separable support. By assumption, the measure $T(t)'\mu$ is a
Radon measure for every $t \in S$. Consequently, we find a separable set $E_t$ such that 
$T(t)'\mu (A) = 0$ for all $A \subset E\setminus\overline{E_t}$. Define
\[ E_0 := \overline{\bigcup_{r \in S\cap \mathbb{Q}}E_r} \,\, . \]
Then $E_0$ is a separable set. We claim that $\rho$ is supported in $E_0$. Let $A \subset E\setminus
E_0$ be an open set. Then $A$ is an $F_{\sigma}$-set, say $A = \bigcup F_n$ for an increasing sequence $(F_n)_{n\in\CN}$
of closed sets. By Tieze's extension theorem, there exist functions $f_n$ such that 
$f|_{F_n} \equiv 1$ whereas $f|_{A^c}\equiv 0$. By right continuity of the paths we have
\[ \dual{f_n}{T(t)'\mu} = \lim_{r\downarrow t\, , \, r \in \mathbb{Q}}
 \dual{f_n}{T(r)'\mu} = 0 \,\, ,
\]
for all $n \in \CN$ and $t \geq 0$. Integrating over $S$ yields $\int_S \dual{f_n}{T(t)'\mu} dt = 0$. Now the
dominated convergence theorem implies that $\rho (A) = \lim_{n\to \infty} \int_S \dual{f_n}{T(t)'\mu} dt = 0$. 
This proves that $\rho$ is supported in $E_0$ and is hence a Radon measure. 
\end{proof}

\begin{rem} 
 The assumption that $\mathbf{T}$ is $\sigma$-continuous at 0 is equivalent with $\mathbf{T}$
being `stochastically continuous', cf.\ \cite[Theorem 3.8]{lanthi}
\end{rem}

\begin{example}\label{ex.sorgen}
Let $E$ denote the real line endowed with the Sorgenfrey topology $\tau_S$ which is generated by
the collection of all intervals $[a,b)$ for $a<b$. Then the shift semigroup $\mathbf{T}$, given
by $T(t)f(x) = f(x+t)$, defines a semigroup on $(C_b(E), \mathcal{M}_0(E))$ which has the following
properties. (i) it is $\sigma$-continuous at 0; (ii) for every $f \in C_b(E)$ the orbit $T(\cdot )f$
is locally $\mathcal{M}_0(E)$-integrable; (iii) $\mathbf{T}$ is not integrable. 
\end{example}

\begin{proof}
We note that $f\in C_b(E)$ if and only if $f$ is bounded and right continuous as a function on $\CR$ with
its usual topology. Furthermore, the Borel $\sigma$-algebra of $E$ is just the usual Borel $\sigma$-algebra
of $\CR$ when endowed with the usual topology. It is well known that every compact subset of $E$ is necessarily
countable (though not every countable subset of $\CR$ is compact with respect to $\tau_S$), and thus 
$\mathcal{M}_0(E) = \ell^1(\CR )$, the space of all discrete measures on $\CR$. 

From these observations it is easy to see that $T(t)'\mathcal{M}_0(E) \subset \mathcal{M}_0(E)$ -- hence
$\mathbf{T}$ is a semigroup on $(C_b(E), \mathcal{M}_0(E))$ -- and that $\mathbf{T}$ is $\sigma$-continuous at 0. 
Let us show that $C(E)$ is complete with respect to the compact-open topology. As noted in Example
\ref{ex.cb} this implies that the strict topology on $C_b(E)$ is complete and thus assertion (ii) follows
from Theorem \ref{t.integrablecb}.

So let $(f_\alpha) \subset C_b(E)$ be a net converging to some function $f$ with respect to the compact-open
topology. Fix $t\in \CR$. To prove that $f \in C_b(E)$, it suffices to prove that $f(t_n) \to f(t)$
as $n\to \infty$ for every sequence $t_n \downarrow t$. However, given such a sequence, the set 
$K=\{t, t_n\,:\,n\in \CN\}$ is $\tau_S$-compact and thus $f_\alpha \to f$ uniformly on $K$. 
The convergence of $f(t_n) \to f(t)$
as $n\to \infty$ now follows from a standard $\frac{\eps}{3}$-argument.

Concerning assertion (iii), we note that for every $f \in C_b(E)$ we have
\[ \int_0^1\dual{f}{T(t)'\delta_0}\, dt = \dual{f}{\lambda_{(0,1)}} \,\, ,\]
where $\lambda_{(0,1)}$ denotes the restriction of Lebesgue measure to $(0,1)$. Since this measure
does not belong to $\mathcal{M}_0(E)$, the orbit of $T(\cdot )'\delta_0$ is not locally $C_b(E)$-integrable
and (iii) follows from Theorem \ref{t.measurabledescription}.
\end{proof}

We close this section by proving that if the topology of $E$ is induced by a separable metric,
in particular if $E$ is a Polish space (i.e.\ the topology of $E$ is induced by a 
complete, separable metric), then the norming dual pair $(C_b(E), \mathcal{M}_0(E))$
is countably separated. As a consequence of this, Theorem \ref{t.countablysep} may be applied, yielding that
every integrable semigroup on $(C_b(E), \mathcal{M}_0(E))$ is uniquely determined by its Laplace transform.
Furthermore, if $E$ is a Polish space, then, given exponential boundedness, the $\sigma$-continuity assumption
in Theorem \ref{t.integrablecb} may be weakened to the requirement that $t\mapsto \dual{T(t)f}{\mu}$ is measurable
for every $f\in C_b(E)$ and $\mu \in \mathcal{M}_0(E)$. This is evident from the proof of that theorem.

\begin{thm}\label{t.countablysep}
Let $(E,\mathcal{B}(E))$ be a separable metric space endowed with its Borel $\sigma$-algebra.
Then the norming dual pair $(C_b(E),\mathcal{M}_0(E))$ is countably separated.
\end{thm}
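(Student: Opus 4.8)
The plan is to verify the two requirements in Definition \ref{d.countably}: we must exhibit a countable subset of $C_b(E)$ separating points in $\mathcal{M}_0(E)$, and a countable subset of $\mathcal{M}_0(E)$ separating points in $C_b(E)$. Both constructions will exploit separability of $E$. Fix a countable dense subset $\{x_n\}_{n\in\CN}$ of $E$.

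For the second requirement I would use the Dirac measures $\{\delta_{x_n}\}_{n\in\CN}\subset \mathcal{M}_0(E)$. These separate points in $C_b(E)$: if $f\in C_b(E)$ satisfies $\dual{f}{\delta_{x_n}} = f(x_n) = 0$ for all $n$, then $f$ vanishes on the dense set $\{x_n\}$, and by continuity $f\equiv 0$. This part is immediate.

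The first requirement is the main obstacle. I need countably many bounded continuous functions $\{f_k\}$ such that $\dual{f_k}{\mu}=0$ for all $k$ forces $\mu = 0$ in $\mathcal{M}_0(E)$. The natural idea is to take the metric $d$ on $E$ and, for each $n$ and each positive rational $r$, form the function
\[ g_{n,r}(y) := \big(1 - r^{-1}d(y,x_n)\big)^+ , \]
which is bounded, continuous, equals $1$ at $x_n$, and is supported in $\overline{B}(x_n,r)$. I would then let $\{f_k\}$ be an enumeration of the algebra generated over $\CQ[i]$ by this countable collection together with the constant function $\one$; this is still countable. The point is that the functions $g_{n,r}$ separate the points of $E$ (given $x\neq y$, choose $x_n$ close to $x$ and $r$ small so that $g_{n,r}(x)\neq g_{n,r}(y)$), so the algebra $\mathcal{A}$ they generate separates points and contains constants. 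Since $E$ is a metric space the closure of $\mathcal{A}$ in $C_b(E)$ under the compact-open topology contains, by a Stone--Weierstrass argument on each compact set, all of $C_b(E)$ restricted to compacta; more precisely, for any $f\in C_b(E)$ and any compact $K$ there are $f_k\in\mathcal{A}$ approximating $f$ uniformly on $K$. Now suppose $\mu\in\mathcal{M}_0(E)$ satisfies $\dual{f_k}{\mu}=0$ for every $k$. Because $\mu$ is Radon, its total variation is inner regular, so it suffices to show $\int_K f\,d\mu$ is controlled for compact $K$; approximating an arbitrary $f\in C_b(E)$ uniformly on a large compact set carrying almost all the mass of $|\mu|$, and using $\sup_k\norm{f_k}_\infty$ bounds together with the boundedness of $\mu$, one concludes $\dual{f}{\mu}=0$ for all $f\in C_b(E)$. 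By the norming property of the pair $(C_b(E),\mathcal{M}_0(E))$ this yields $\norm{\mu}=0$, i.e.\ $\mu=0$.

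The delicate point is the interchange between the compact-open approximation furnished by Stone--Weierstrass and the global pairing against $\mu$: I expect the hard part to be controlling the contribution of $|\mu|$ outside a compact set, which is exactly where Radonness is essential. Concretely, given $\eps>0$ I would pick a compact $K$ with $|\mu|(E\setminus K)<\eps$, approximate $f$ by some $f_k\in\mathcal{A}$ to within $\eps$ uniformly on $K$ while keeping $\norm{f_k}_\infty\leq\norm{f}_\infty+1$, and then estimate $|\dual{f}{\mu}| = |\dual{f-f_k}{\mu}| \leq \eps\,\norm{\mu} + 2(\norm{f}_\infty+1)\,\eps$; letting $\eps\downarrow 0$ finishes the argument. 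Once this approximation step is handled carefully, both separation properties are in hand and the pair is countably separated.
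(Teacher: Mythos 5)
Your second half (the Dirac measures $\delta_{x_n}$ over a countable dense set separate points in $C_b(E)$, since a continuous function vanishing on a dense set vanishes identically) is exactly the paper's argument and is fine. The gap is in the first half, at precisely the point you yourself call delicate. Stone--Weierstrass applied to the restricted algebra $\mathcal{A}|_K\subset C(K)$ produces $g\in\mathcal{A}$ with $\sup_K|f-g|<\eps$, but it gives no control whatsoever on $\sup_{E\setminus K}|g|$; the assertion that the approximant can be chosen ``while keeping $\norm{f_k}_\infty\leq\norm{f}_\infty+1$'' is unjustified and in general false. An element of $\mathcal{A}$ is a polynomial in the bump functions $g_{n,r}$, and a polynomial that is small on the (possibly thin) image of $K$ under $(g_{n_1,r_1},\dots,g_{n_m,r_m})$ can be enormous elsewhere on $[0,1]^m$ --- the Chebyshev phenomenon. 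Without a global bound on $\norm{f_k}_\infty$ the tail term $\int_{E\setminus K}|f-f_k|\,d|\mu|$ cannot be estimated by a multiple of $\eps$, so the chain $|\dual{f}{\mu}|\leq\eps\norm{\mu}+2(\norm{f}_\infty+1)\eps$ collapses and the conclusion $\dual{f}{\mu}=0$ for all $f\in C_b(E)$ is not established.

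The repair is to abandon the algebra/Stone--Weierstrass route in favour of lattice operations, which is what the paper does. It takes bump functions $f_{n,m}$ equal to $1$ on $\overline{B}(x_m,\frac{1}{n+1})$, with values in $[0,1]$ and vanishing off $B(x_m,\frac1n)$, and lets $M$ consist of all finite maxima $f_{n,J}=\max\{f_{n,m}:m\in J\}$. Every element of $M$ takes values in $[0,1]$, so norm control is never an issue. If $\int f\,d\mu=0$ for all $f\in M$, one covers a compact $K$ by finitely many balls $B(x_{m_i},\frac{1}{n+1})$ meeting $K$ and checks that the resulting maxima converge pointwise and boundedly to $\one_K$ as $n\to\infty$; dominated convergence gives $\mu(K)=0$, and inner regularity of the Radon measure $\mu$ yields $\mu=0$. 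Note that the paper's target is the weaker statement $\mu(K)=0$ for every compact $K$, not $\dual{f}{\mu}=0$ for every $f\in C_b(E)$, and that weaker target is exactly what a uniformly bounded, pointwise convergent family can reach. Your generators $g_{n,r}$ would serve perfectly well as input to this lattice argument; it is only the passage through the generated algebra that breaks.
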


\begin{proof}
Let $D:= \{x_m\, :\,m \in \CN \}$ be a countable, dense subset of $E$. Then
$\{\delta_{x_m}\, : m \in \CN\} \subset \mathcal{M}_0(E)$ separates points in $C_b(E)$ as
continuous functions which coincide on a dense subset are equal. To find a sequence in $C_b(E)$ 
which separates points in $\mathcal{M}_0(E)$, we proceed as follows. For $n,m\in \CN$, choose
$f_{n,m}\in C_b(E)$ such that
\[\one_{\overline{B}(x_m, \frac{1}{n+1})} \leq f_{n,m}\leq \one_{B(x_m, \frac{1}{n})^c} \,\, .\]
If $J\subset \CN$ is a finite subset, we put $f_{n,J} := \max\{ f_{n,m}\, : \, m \in J \}$ and define
\[ M := \{f_{n,J}\, : \, n \in \CN\, , \, J\subset \CN\,\, \mbox{finite}\, \} \,\, .\]
Then $M$ is a countable set. We claim that $M$ separates points in $\mathcal{M}_0(E)$. 
To that end, let $\mu \in \mathcal{M}_0(E)$ satisfy $\int f \, d\mu = 0$ for all $f \in 
M$. We have to prove that $\mu = 0$. Since $\mu$ is a Radon measure, it suffices to prove 
that $\mu (K) = 0$ for all compact sets $K$. So let a compact set $K\neq \emptyset$ be given. As $D$
is dense in $E$, the set $K$ is covered by $\{ B(x_m, (n+1)^{-1})\, : \, m \in \CN\, \}$
for every $n\in \CN$. Since $K$ is compact, there exist $m_1,\ldots,m_{k_n}$ such that $K$
is already covered by $\mathcal{B}_n := \{ B(x_{m_i}, (n+1)^{-1})\,:\, i= 1,\ldots, k_n\,\}$.
We may assume without loss that every ball in $\mathcal{B}_n$ intersects $K$. Define
$f_n := f_{n,\{m_1,\ldots,m_{k_n}\}} \in M$. Then $(f_n)_{n\in\CN}$ is a bounded sequence which
converges pointwise to $\one_K$. As $\int f_n\, d\mu \equiv 0$ by assumption, the dominated
convergence theorem yields $\mu (K) = \lim f_n\, d\mu = 0$. As $K$ was arbitrary, $\mu = 0$.  
\end{proof}

\subsection*{Acknowledgement} The author is grateful to M.\ Haase and J.\ van Neerven for some
valuable comments on earlier versions of this paper. We also thank J.\ Bonet and B.\ Cascales
for providing an early preprint of \cite{boca}.

\end{document}